\pgfplotsset{compat=1.18}
\newtheorem{theo}{Theorem}[section]
\newtheorem{lem}[theo]{Lemma}
\newtheorem{cor}[theo]{Corollary}
\newtheorem{prop}[theo]{Proposition}
\newcommand{\eps}{\varepsilon}
\newcommand{\qtext}[1]{\quad\mbox{#1}\quad}
\newcommand{\qqtext}[1]{\qquad\mbox{#1}\qquad}
\newcommand{\rd}{\mathrm d}
\newcommand{\R}{\mathbb{R}}
\newcommand{\N}{\mathbb{N}}
\newcommand{\Q}{{\mathcal{Q}}}
\newcommand{\E}{{\mathcal{E}}}
\newcommand{\EE}{{\mathbb{E}}}
\newcommand{\D}{{\mathcal{D}}}
\renewcommand{\H}{{\mathcal{H}}}
\renewcommand{\t}{\theta}
\newcommand{\cone}{{\mathfrak C}}
\newcommand{\uL}{{\underline L}}
\newcommand{\uC}{{\underline C}}
\newcommand{\pf}{{}_{\#}}
\renewcommand{\P}{\mathcal{P}}
\renewcommand{\o}{\omega}
\newcommand{\g}{\gamma}
\newcommand{\PP}{\mathbb P}
\renewcommand{\O}{\Omega}
\newcommand{\G}{\Gamma}
\newcommand{\pO}{{\partial\Omega}}
\newcommand{\pD}{{\partial D}}
\newcommand{\bD}{{\bar D}}
\newcommand{\C}{\mathcal C}
\newcommand{\Leb}{{\mathrm{Leb}}}
\newcommand{\1}{\mathbbm{1}}
\newcommand{\ba}{{A}}
\def\dive{\operatorname{div}}
\numberwithin{equation}{section}
\begin{document}
\title{Large deviations for sticky-reflecting Brownian motion with boundary diffusion}
\date{}
\author{Jean-Baptiste Casteras, L\'eonard Monsaingeon, Luca Nenna}

\maketitle
\abstract{
We study a Schilder-type large deviation principle for sticky-reflected Brownian motion with boundary diffusion, both at the static and sample path level in the short-time limit.
A sharp transition for the rate function occurs, depending on whether the tangential boundary diffusion is faster or slower than in the interior of the domain.
The resulting intrinsic distance naturally gives rise to a novel optimal transport model, where motion and kinetic energy are treated differently in the interior and along the boundary.
}
\\
\bigskip

{\it Keywords and phrases.} Large deviation principle, sticky boundary diffusion, Schr\"odinger problem, optimal transport

\section{Introduction}
In its classical version, Schilder's theorem completely describes on the path level the short-time behaviour of Brownian motion with generator $\Q=\frac 12\Delta$.
More precisely, the slowed-down Wiener measure $R^\eps$ on $\O=C([0,1];\R^d)$, corresponding to the process with generator $\Q^\eps=\frac\eps 2\Delta$, satisfies in the small-time regime $\eps\to 0$ the Large Deviation Principle
\begin{equation}
\label{eq:LDP_schilder}
R^\eps
\underset{\eps\to 0}{\asymp}
\exp\left(-\frac 1\eps C(\o)\right),
\hspace{1cm}
C(\o)=\frac 12\int_0^1|\dot\omega_t|^2dt
\end{equation}
with rate function given by the kinetic energy.
The short-time behaviour can of course be analyzed for more general diffusion processes and this is interesting for many reasons, but let us just mention three in connection with analysis, optimal transport, and PDEs:
\begin{enumerate}
\item
Varadhan's short-time formula \cite{varadhan1967behavior}
$$
-\lim\limits_{t\searrow 0} t\log p_t(x,y)=\frac 12 d^2(x,y)
$$
relates the transition kernel $p_t(x,y)$ to the intrinsic distance $d^2(x,y)$ of a given diffusion process, which in turn can be used to establish upper and lower heat kernel estimates, Harnack inequalities, etc.
 \item
 The short-time behaviour is a key ingredient for the so-called \emph{Schrödinger problem}, which has been recognized over the past 20 years as an entropic approximation of Monge-Kantorovich optimal transport \cite{leonard2012schrodinger,leonard2014survey,mikami2004monge}.
 More precisely, given two distributions $\rho_0,\rho_1\in \P(\R^d)$ and writing $H(P|R)=\int \frac{\rd P}{\rd R}\log\left(\frac{\rd P}{\rd R}\right)\rd R$ for the relative entropy of $P\ll R$ with respect to a reference measure $R$, the Schrödinger bridge problem reads
\begin{equation}
 \label{eq:SP_eps}
  \boxed{\C^\eps(\rho_0,\rho_1)=\min\limits_{P\in \P(\O)}\Big\{ \eps H(P\,\vert\,R^\eps)
  \qtext{s.t.}
  P_0=\rho_0,P_1=\rho_1\Big\}.}
 \end{equation}
 Here and throughout $e_t(\o)=\o_t$ denotes the time-$t$ evaluation map, and $P_0={e_0}_\# P$ and $P_1={e_1}_\# P$ denote the marginals of the path measure $P$ at times $t=0,1$.
 Roughly speaking, as soon as the slowed-down measure satisfies the LDP $R^\eps\asymp \exp(-\frac 1\eps C(\o))$ the entropic problem Gamma-converges to
 $$
 \C=\Gamma-\lim\limits_{\eps\to 0}\C^\eps
 $$
 given by
 $$
\boxed{\C(\rho_0,\rho_1)=\min\limits_P\left\{ \int C(\o) P(\rd\o)
\qtext{s.t.}
P_0=\rho_0,P_1=\rho_1\right\},}
 $$
 see e.g. \cite{leonard2012schrodinger,bernton2022entropic,baradat2020small,benamou2019generalized,carlier2017convergence}.
 This is nothing but the Lagrangian version of the celebrated Benamou-Brenier Eulerian interpretation of Monge-Kantorovich optimal transport \cite{BB}.
 Moreover, writing
 \begin{equation}
 \label{eq:static_cost_from_dyn}
 c(x,y)=\min\limits_\o\big\{C(\o)\qtext{s.t.}\o(0)=x,\o(1)=y\big\}
 \end{equation}
 one also has the equivalent static Kantorovich formulation
 $$
\boxed{\C(\rho_0,\rho_1)=\min\limits_\pi\left\{ \int c(x,y)\pi(\rd x,\rd y)
\qtext{s.t.} \pi_x=\rho_0,\pi_y=\rho_1\right\}.}
 $$
 Hence the rate function $C(\o)$ in Schilder's theorem is deeply related to the optimal transport problem.
 \item
Since the works of Jordan, Kinderlehrer and Otto \cite{JKO98}, it is well-known that the canonical Fokker-Planck equation $\partial_t\rho_t=\Q^*\rho_t=\frac 12 \Delta\rho_t$ (with no-flux conditions on the boundary of the domain $D\subset\R^d$) can be interpreted as the gradient flow of the entropy $\H(\rho)=\int_D\rho\log\rho\, \rd x$ with respect to the Euclidean Wasserstein distance $W^2(\mu,\nu)=\C(\mu,\nu)=\min\limits_\pi\int |x-y|^2\pi(\rd x,\rd y)$.
Two ingredients connect here this macroscopic PDE with the microscopic (reflected) Brownian motion: the entropy $\H(\rho)=\H(\rho\vert\Leb)$ is computed relatively to the stationary, reversible Lebesgue measure, and the Wasserstein distance is built upon the underlying Euclidean cost $c(x,y)=|x-y|^2$ given by the intrinsic distance of the stochastic process.
In the series of works \cite{ADPZ,duong2013wasserstein,liero2017microscopic,mielke2014relation} it was understood that this macroscopic gradient flow structure can be justified at the microscopic level precisely from large deviation principles.
In particular \eqref{eq:LDP_schilder} plays a key role in the fist order expansion (Gamma-convergence) relating discrete-time rate functions on the one-hand, characterizing the hydrodynamic limit $N\to\infty$ for a large system of independent Brownian particles, and on the other hand minimizing movements/JKO schemes, a discrete-time characterization of the dissipative gradient flow structure of the macroscopic heat equation.
We refer e.g. to \cite[Theorem 3]{ADPZ} for a self-contained statement and more details, but let us just summarize by saying that it is the LDP \eqref{eq:LDP_schilder} that determines the correct Wasserstein dissipation mechanism and entropy functional, which in turn gives the Fokker-Planck equation as a canonical gradient flow.
See also \cite[\S 13.3 and Theorem 13.37]{FK06}.
 \end{enumerate}

All of this can be extended to cover more general models and processes (see e.g. \cite{patterson2019large,patterson2024variational,peletier2022jump,peletier2023cosh} to name just a few), but the analysis is always restricted to somehow ``smooth'' settings and requires a case-to-case adaptation.
In this work, motivated by the three questions above and in particular by the connection with Fokker-Planck PDEs, we take interest in the short-time behaviour of a particular non-smooth diffusion, the so-called \emph{Sticky-reflected Brownian Motion with boundary diffusion} (SBM in short).
Sparked by applications in interacting particle systems with boundary or zero-range interactions \cite{aurell2020behavior,konarovskyi2024reversible,grothaus2020overdamped}, among others, SBM has received renewed attention in the last decade \cite{bormannM2024functional,bormann2024functional,konarovskyi2021spectral,peskir2015boundary,engelbert2014stochastic,bou2020sticky}.
In a given domain $D\subset \R^d$ SBM can be roughly described as follows.
While in the interior, the process performs standard Brownian motion.
Upon hitting the boundary, SBM ``sticks'' there for a while and thereafter undergoes purely tangential diffusion along $\G=\pD$.
The process then eventually reenters the domain and resumes standard Brownian motion, and so on.
A delicate balance determines the jump rate from/to the boundary and involves the \emph{local time} at the boundary, see later on Section~\ref{sec:generalities} for rigorous details.
Accordingly, the Fokker-Planck equation takes the form of a system of coupled interior/boundary parabolic equations \eqref{eq:FPa}.

By simple scaling arguments one can always assume that the interior diffusion occurs with volatility $\sigma_\text{int}=1$, but a particularly important parameter in the model is the tangential diffusivity coefficient $a=\sigma^2_{\G}>0$.
We will show that a sharp transition takes place across $a=1$, which is the critical threshold for which interior and boundary diffusions match tangentially:
For $a>1$ motion along the boundary is preferred, the rate function in our LDP will see the impact of $a$ in a nontrivial and global fashion, and therefore the intrinsic distance $d_a(x,y)$ induces a specific geometry that strongly depends on $a>1$.
On the other hand for $a\leq 1$ interior motion becomes more favourable from an energetic perspective, the rate function will not depend on $a$ anymore, and one recovers instead a purely Euclidean scenario $d_a(x,y)\equiv|x-y|$ independently of $a<1$.

In \cite{casteras2024sticky} the first two authors showed that the Fokker-Planck equation for SBM is a gradient flow for the standard Wasserstein distance when $a=1$.
This is expected when interior and boundary diffusions match tangentially, so that the intrinsic distance of SBM coincides with the Euclidean one $d^2_a(x,y)=|x-y|^2$.
The case $a>1$ should lead to a true metric gradient flow (in the sense of \emph{curves of maximal slope} \cite{AGS}) with respect to the Wasserstein distance $W^2_a$ induced by the cost $c(x,y)=d^2_a(x,y)$ and will be the subject of a future article \cite{CMN}.
For $a<1$, since $d^2_a(x,y)=|x-y|^2$ is independent of $a$, the natural transportation distance is the Euclidean Wasserstein distance, so any Wasserstein gradient flow would not see $a$.
On the other hand the probabilistic Fokker-Planck equation clearly depends on $a$, see later on Section~\ref{sec:generalities} and in particular \eqref{eq:FPa}.
This means that the gradient flow, if any, cannot be a true metric one, and that a finer understanding of the infinitesimal structure is needed.
This will be investigated in a separate work \cite{BMRvR}.

Our goal here is threefold: 1) we study the LDP in its own right, 2) for $a<1$ we exhibit a very natural example of Sturm's paradigm \cite{sturm1997diffusion} when the diffusion process is not fully determined by its intrinsic distance, and 3) for $a>1$ the LDP gives, via the Gamma-convergence entropic-to-deterministic problems, gives the natural dissipation mechanism and yields a limiting, nonstandard optimal transport problem which is thereby justified and will therefore be used in subsequent works to study gradient-flows.
For full disclosure, we stress at this point that the rigorous analysis will only be carried in the particular case of half-spaces $D=\R^d_+$.
\\

The paper is organized as follows.
Section~\ref{sec:generalities} contains a detailed description of SBM and its basic properties.
In Section~\ref{sec:heuristics} we give a heuristic derivation of various levels of LDPs and formally identify the correct rate functions.
In Section~\ref{sec:transition_kernel} we explicitly compute the Markov transition kernel of SBM in half-spaces.
In Section~\ref{sec:static} we build on this explicit representation to derive a static LDP for the initial-terminal joint marginals $R^{\eps}_{0,1}\asymp\exp(-\frac 1\eps c(x,y))$, Theorem~\ref{theo:LDP_static}.
Section~\ref{sec:cost} contains an analysis of the cost and identifies it as a dynamical Lagrangian minimization $c(x,y)=\min \int_0^1L(\o_t,\dot\o_t)\rd t$, Theorem~\ref{theo:c_static_dynamic-geodesics}.
Finally, in Section~\ref{sec:dynamical_LDP} we show exponential tightness of the path measure and strengthen the static LDP to a full sample paths LDP $R^\eps_x\asymp\exp(-\frac 1\eps C_x(\o))$, Theorem~\ref{theo:LDP_dynamic}.

\section{Sticky-reflected Brownian Motion with boundary diffusion}
\label{sec:generalities}

In its simplest form, SBM was originally studied by Feller \cite{feller1952parabolic} for the classification of boundary conditions for one-dimensional diffusions, and was later extended to include boundary diffusions in higher dimensions \cite{ikeda1961construction,takanobu1988existence,watanabe1971stochastic} including to Wentzell boundary conditions.
Let $D\subset\R^d$ be a smooth domain with boundary $\Gamma=\pD$.
We write $\G$ for objects and integrals intrinsically defined on $\G$, seen as a $(d-1)$-manifold of its own, while we keep the notation $\pD$ for the boundary trace of objects defined in the whole domain $\bar D$ (typically appearing when integration by parts is performed).

The Sticky-reflected Brownian Motion with boundary diffusion (SBM in short) is the $\bar D$-valued stochastic process with Feller generator
\begin{equation}
\label{eq:def_generator_L}
\Q \phi(x)=
 \begin{cases}
        \frac 12\Delta \phi(x) & \mbox{if }x\in D\\
        \frac a2\Delta_\G \phi(x)-\theta\partial_n \phi(x) & \mbox{if }x\in\pD
 \end{cases}
 \end{equation}
 and domain
 \begin{equation}
 \label{eq:def_domain_generator_L}
 \D(\Q)=\Big\{\phi\in C(\bD)\qtext{s.t.} \Q \phi\in C(\bD)\big\}.
 \end{equation}
 Here $n$ denotes the outer unit normal, and we write throughout $\nabla_\G,\dive_\G,\Delta_\G$ for the tangential gradient, divergence, and Laplace-Beltrami operators along $\G=\pD$.
 The tangential diffusion coefficient $a>0$ will play a crucial role and we sometimes emphasize the dependence by writing $\Q=\Q_a$.
 On the other hand the stickiness $\theta$ plays a very minor role and we omit the dependence.
 The reversible stationary measure is
 \begin{equation}
 \label{eq:def_stationary_measure}
 \mu(\rd x)=\rd x + \frac{1}{2\theta}\sigma(\rd x),
 \end{equation}
 where $\sigma=\H^{d-1}_\pD$ denotes the Lebesgue measure on the boundary.
 Note that $\mu$ depends on $\t$, but crucially not on $a$.
Integration by parts
 \begin{multline}
 \label{eq:pi_reversible_IBP}
\int_\bD \phi \Q\psi \,\rd \mu
=\int_D \phi\frac 12\Delta\psi\,\rd x + \int_\G \phi \left(\frac a2\Delta_\G \psi -\theta\partial_n\psi\right) \frac {1}{2\theta}\rd \sigma
\\
=\frac 12\left(-\int_D \nabla\phi\cdot\nabla \psi\,\rd x + \int_\pD\phi\partial_n \psi \,\rd \sigma\right)
- \left(
\frac a{4\t}\int_{\G}
\nabla_{\G} \phi\cdot \nabla_{\G} \psi\, \rd \sigma
+\frac 12\int_\G \phi\partial_n\psi\, \rd \sigma
\right)
\\
=-\frac 12\int_\O \nabla\phi\cdot\nabla \psi\,\rd x -\frac a{4\t}\int_{\G}
\nabla_{\G} \phi\cdot \nabla_{\G} \psi\, \rd \sigma
 \end{multline}
 shows that $\mu$ is indeed symmetric, and the corresponding Dirichlet form is
 $$
 \E_a(\phi)=
 \frac 12\int_D|\nabla\phi|^2\rd x
 +\frac{a}{4\t}\int_\pD |\nabla_\G\phi|^2\rd\sigma
 $$
 with domain
 $$
 \D(\E_a)=\Big\{\phi\in H^1(D)\qqtext{s.t.}\phi|_{\pD}\in H^1(\pD)\Big\}.
 $$
 (Here and throughout $\phi|_{\pD}$ stands for the boundary trace $\operatorname{tr}\phi=\phi|_{\pD}$ of $\phi\in H^1(D)$).
 It is shown in \cite{grothaus2017stochastic} that $\E_a$ is symmetric, regular, strongly local, and recurrent.
 We denote by $p_t(x,\rd y)$ the transition kernel with density $p_t(x,y)=p_t(y,x)$ w.r.t to $\mu$.
By \cite[thm. 3.15 and thm 3.17]{grothaus2017stochastic} for any $x\in \bD$ (including $x\in \G=\pD$) there is a unique path-measure $R_x$ on $\O=C([0,1];\bD)$ solving the martingale problem and represented through the SDE
\begin{equation}
\label{eq:SDE}
 \begin{cases}
  \rd X_t=
  \1_D(X_t)\rd B_t +\1_{\G}(X_t)\left[\sqrt a\rd B^{\G}_t-\t n(X_t)\rd t\right]\\
  \rd  B^{\G}_t=\pi(X_t)\circ\rd B_t\\
  X_0=x
 \end{cases}.
\end{equation}
Here
$$
\pi(x)=\operatorname{Id}-n(x)n^t(x)\in M_{d\times d}(\R)
$$
denotes the orthogonal projection on the tangent space $T_x\pD$ at a point $x\in \pD$ with outer normal $n(x)$.
The Stratonovich SDE $\rd  B^{\G}_t=\pi(X_t)\circ\rd B_t$ simply means that $B^{\G}_t$ is a Brownian motion on the boundary $\G=\pD$ with Laplace-Beltrami generator $\frac 12 \Delta_\G$, see \cite{hsu2002stochastic}.
Note that the full $d$-dimensional Brownian motion $B_t$ inside $D$ contains more information than the tangential one $B^\G_t$.
In \cite{engelbert2014stochastic} it is shown that even in one dimension $D=\R^+$ the SDE representing 1-dimensional SBM does not have strong solutions, and it is therefore natural to represent the boundary BM $B^\G$ in terms of the interior BM.
An important feature of \eqref{eq:SDE} is that it characterizes the local time at the boundary $L^\G_t$ as
$$
\rd L^\G_t= \t\1_\G(X_t)\rd t,
$$
see e.g. \cite[thm. 5]{engelbert2014stochastic} and \cite[\S IV.7]{ikeda2014stochastic}.
In particular, stickiness $\t>0$  yields non-trivial sojourn on the boundary with occupation time
\begin{equation*}
O_t=\int_0^t \1_\G(X_s)\rd s =\frac 1\t L^\G_t.
\end{equation*}

Finally, for the sake of completeness and also for future reference, let us  derive the Fokker-Planck equation for SBM.
The relevant laws $X_t\sim \rho_t\in \P(\bD)$ will as always be absolutely continuous w.r.t the stationary measure $\mu$ in \eqref{eq:def_stationary_measure}, hence we only consider measures $\rho\ll\mu$ and accordingly write
$$
\rho(\rd x)=u(x)\rd x + v(x)\sigma(\rd x).
$$
In order to compute the dual $\Q^*$ of \eqref{eq:def_generator_L} we take $\phi\in \D(\Q)$ and integrate by parts (assuming that $u,v$ are smooth enough)
\begin{multline*}
 \int_{\bD}\phi(x)[\Q^*\rho](\rd x)
 =\int_{\bD}[\Q\phi](x)\rho(\rd x)
 = \int_{D}\left[\frac 12\Delta \phi\right] u
 +\int_{\G}\left[\frac a2\Delta_\G \phi -\t\partial_n \phi\right]\, v
 \\
 =
 \frac 12\left(\int_D \phi \Delta u +\int_{\pD} u\partial_n \phi - \phi\partial_n u \right)
 +
 \left(\int _{\G}\phi\frac a2\Delta_\G v -\t v \partial_n \phi\right)
 \\
 =\int_D\frac 12\Delta u \,\phi
 +\int_\G \left(\frac a 2\Delta_\G v-\partial_n u\right)\phi +
 \int_\pD\left(\frac u2 -\t v\right)\partial_n \phi.
\end{multline*}
(all the integrals being implicitly computed with respect to the Lebesgue measures $\rd x,\rd\sigma$ on $D$ and $\G=\pD$, respectively)
As a consequence the abstract Fokker-Planck equation $\partial_t\rho_t =\Q^*\rho_t$ can be written in weak form (with again $\rho_t=u_t\rd x+v_t\rd \sigma$)
\begin{align*}
\forall\,\phi\in \D(\Q):\hspace{1cm}
\int_D \phi \partial_t u_t+\int_\G \phi \partial_t v_t
 &=
 \frac{d}{dt}\left(\int_D \phi u_t+\int_\G \phi v_t\right)
 \\
 &
 =
 \frac{d}{dt}\int_{\bD} \phi  \rho_t
 =\int_{\bD} (\Q\phi)\rho_t
 =\int_{\bD} \phi \Q^*\rho_t
 \\
 &=
 \int_D \frac 12 \Delta u_t \,\phi  +\int_\G \left(\frac a2\Delta_\G v_t-\frac 12\partial_n u_t\right)\phi
\\
 &  \hspace{4cm} + \int_\pO\left(\frac {u_t}2 -\t v_t\right)\partial_n\phi.
\end{align*}
Taking first $\phi\in C^\infty_c(\O)$ gives simply
$$
\partial_t u_t=\frac 12\Delta u_t
$$
in the interior, and we can simply cancel $\int_D \phi \partial_t u_t =\int_D \phi \frac 12 \Delta u_t$ in the previous equality.
This leaves
$$
\forall\,\phi\in \D(\Q):\hspace{2cm}
\int_\G \phi \partial_t v_t
=
\int_\G \left(\frac a2\Delta_\G v_t-\frac 12\partial_n u_t\right)\phi  + \int_\pO\left(\frac 12u_t -\t v_t\right)\partial_n\phi.
$$
Taking now any $\phi\in C^\infty(\G)$ and extending to $\phi\in C^\infty(\bD)$ with zero normal derivative such that $\phi\in \D(\Q)$ (this is no too difficult), we see now that $\int_\G \phi\partial_t v_t = \int_\G \left(\frac a2 \Delta_\G v_t-\partial_n u_t\right)\phi$ for all $\phi\in C^\infty(\G)$, meaning that
$$
\partial_t v_t=\frac a2\Delta_\G v_t-\frac 12\partial_n u_t.
$$
Subtracting again from the previous equality, we are finally left with
$$
\forall\,\phi\in \D(\Q):\hspace{2cm}
0=\int_{\pD}\left(\frac 12u_t-\t v_t\right)\partial_n \phi.
$$
It is relatively easy to check that the normal trace $\partial_n:\, \D(\Q)\to C(\G)$ is surjective, and therefore
$$
 \left.\frac 12u_t\right|_{\pD}=\t v_t \qquad\mbox{on the boundary }\pO.
$$
In view of \eqref{eq:def_stationary_measure} this simply means that the density $f_t(x)$ of $\rho_t(\rd x)=f_t(x)\mu(\rd x)$ with respect to the stationary, reversible measure $\mu$ is continuous up to the boundary, which should come as no surprise.
Summarizing, the Fokker-Planck equations is
\begin{equation}
\label{eq:FPa}
\partial_t\rho_t=\Q^*\rho_t
\qquad\iff\qquad
\begin{cases}
\rho_t=u_t\rd x+v_t\rd \sigma,
\\
  \partial_t u_t =\frac 12 \Delta u_t & \mbox{in }D,\\
  \partial_t v_t=\frac a2 \Delta_\G v_t-\frac 12\partial_n u_t& \mbox{in }\G,\\
  \frac 12 u_t=\t v_t& \mbox{on }\partial\Omega.
\end{cases}
\end{equation}

\section{Heuristics and technical obstructions}
\label{sec:heuristics}
The SBM process slowed-down on time-scale $\eps>0$ is described by
 $$
 R^\eps_x\coloneqq
 \;\text{\bf the path-measure with generator }\Q^\eps= \eps\Q
 $$
started at $x$.
One of the classical approaches to analyze the large deviations of the diffusion $R^\eps$ as $\eps\to 0$ is as follows, see e.g. \cite{FK06,kraaij2018large}.
Consider the Hamiltonian
$$
H^\eps \phi\coloneqq \eps e^{-\frac\phi\eps}\Q^\eps e^{\frac\phi\eps}
$$
and its associated Hamilton-Jacobi nonlinear semi-group $V^\eps(t)\phi\coloneqq\eps \log \EE_x e^{\frac 1\eps \phi(X^\eps_t)}$ satisfying
$$
\begin{cases}
 \frac d{dt}V^\eps(t)\phi = H^\eps V^\eps(t)\phi,\\
 V^\eps(0)\phi=\phi.
\end{cases}
$$
Convergence of the generators
\begin{equation}
\label{eq:CV_Heps_to_H}
\lim\limits_{\eps\to 0}H^\eps\phi= H\phi
\end{equation}
in some vague sense and for a large enough class of functions $\phi$ should in principle imply convergence of generated semi-groups
\begin{equation}
\label{eq:CV_Veps_to_V}
\lim\limits_{\eps\to 0}V^\eps(t)_\phi=V(t)\phi,
\end{equation}
where the limit $V$ is defined by
$$
\begin{cases}
 \frac d{dt}V(t)\phi = H V(t)\phi,\\
 V(0)\phi=\phi.
\end{cases}
$$
By standard Varadhan-Bryc arguments \cite[chapters 4.2 and 4.3]{dembo2009large} one expects the LDP to hold with dynamical rate function given by the abstract time-slicing
\begin{equation}
\label{eq:C_dyn_slice}
C(\o)
\coloneqq
\sup\limits_{|\tau_N|\to 0}\sup\limits_{\phi_1,\dots,\phi_N}
\sum\limits_{i=1}^N\phi_i(\o_{t_i})-V(t_i-t_{i-1})\phi_i(\o_{t_{i-1}}),
\end{equation}
where the supremum is taken over all partitions $\tau_N$ of the time interval $[0,1]$ such that $0=t_0<t_1<\dots<t_N=1$ with size $|\tau_N|=\max|t_{i+1}-t_i|\to 0$.
In order to retrieve a more tractable expression for this, assume moreover that the variational representation
$$
H\phi(x)
=H(x,\nabla\phi(x))
=\sup\limits_{q\in \R^d}\left\{\nabla \phi(x)\cdot q-L(x,q)\right\},
\hspace{1cm}x\in \bD
$$
holds for a sufficiently well-behaved Lagrangian $L$, i-e with $H(x,\cdot)=L^*(x,\cdot)$ convex conjugate of one another in the sense Fenchel-Legendre duality.
Then $H$ generates a Nisio semi-group $V(t)$, the dynamical cost \eqref{eq:C_dyn_slice} can be computed from usual stochastic control theory \cite{fleming2006controlled} as a Lagrangian action
\begin{equation}
\label{eq:C_slice=C_dyn}
C(\o)=\int_0^1L(\o_t,\dot\o_t)\rd t,
\end{equation}
and the diffusion $X^\eps$ finally satisfies the LDP
$$
R^\eps_x\asymp \exp\left(-\frac 1\eps C_x(\o)\right)
\qqtext{with rate function} C_x(\o)=\iota_{\{\o_0=x\}}+C(\o).
$$
Here
$$
\iota_{\{\o_0=x\}}=
\begin{cases}
 0 & \text{if }\o_0=x\\
 +\infty &\text{else}
\end{cases}
$$
encodes the initial condition $X_0=x$, but one can also cover general initial distributions $X_0^\eps\sim\rho_0^\eps$ as long as they satisfy a LDP $\rho_0^\eps(\rd x)\asymp \exp(-\frac 1\eps C_0(x))$.
By the contraction principle one therefore expects that the initial-terminal joint distribution $R^\eps_{01}=(e_0,e_1)\pf R^\eps_x\in \P(\bD^2)$ satisfies the LDP
$$
R^\eps_{01}\asymp \exp\left(-\frac 1\eps c(x,y)\right)
$$
with the static cost \eqref{eq:static_cost_from_dyn} obtained by Lagrangian minimization
$$
c(x,y)=\min\limits_\o \left\{\int_0^1L(\o_t,\dot\o_t)\rd t\qqtext{s.t.}\o_0=x,\o_1=y\right\}.
$$

This determines the static LDP from the dynamic one, but the analysis goes both ways: by simple scaling the LDP for $\{R^\eps_{01}\}_{\eps}$ immediately gives the LDP for $\{R^{\eps\tau}_{01}\}_{\eps>0}=\{R^\eps_{0\tau}\}_{\eps>0}$ for any fixed $\tau>0$.
One expects in general the collection of all such $(0,\tau)$-joint distributions to fully determine the path measure, hence by time slicing the sample path large deviation should follow from the static one.
This is actually the route that we will take.
\\

Let us try to make this all of this more explicit in our particular setting.
With the generator $\Q^\eps=\eps\Q$ given by \eqref{eq:def_generator_L} one easily computes
\begin{equation}
\label{eq:Heps}
H^\eps\phi(x)=
\begin{cases}
\frac 12 |\nabla\phi|^2+\frac \eps 2\Delta\phi & \text{if }x\in D\\
\frac a2 |\nabla_\G\phi|^2 + \eps\left(\frac a2\Delta_\G\phi - \t\partial_n\phi\right) & \text{if }x\in\pD
\end{cases},
\end{equation}
from which at least formally
\begin{equation}
\label{eq:H}
H^\eps\phi(x)\xrightarrow[\eps\to 0]{}
H\phi(x)=
\begin{cases}
\frac 12 |\nabla\phi|^2 & \text{if }x\in D\\
\frac a2 |\nabla_\G\phi|^2 & \text{if }x\in\pD
\end{cases}.
\end{equation}
The Legendre transform can be computed explicitly as
\begin{equation}
 \label{eq:L}
L(x,q)=\sup\limits_{p} \left\{q\cdot p - H(x,p)\right\}
=
\begin{cases}
\frac 12 |q|^2 & \text{if }x\in D\\
\frac 1{2a} |q|^2 & \text{if }x\in \pD\text{ and }q\in T_x\pD\\
+\infty & \text{else}
\end{cases}.
\end{equation}
It is straightforward to check that $L$ is jointly lower semicontinuous if and only if $a\geq 1$, and therefore the map
$$
\o\longmapsto C(\o)\coloneqq\int_0^1 L(\o_t,\dot\o_t)\rd t\quad\text{is l.s.c. if and only if }
a\geq 1
$$
for the natural weak $H^1$ convergence, see later on Theorem~\ref{theo:c_static_dynamic-geodesics}.
As usual, if a particular LDP holds for some rate function (here $C(\o)$) it also holds for its lower semi-continuous relaxation, so imposing the lower semi-continuity of good rate functions allows to speak unambiguously of \emph{the} rate function.
In our particular context we thus expect that \emph{the} LDP should hold in fact with $L$ replaced by its lower semi-continuous relaxation $\uL$, with effective rate function
\begin{equation}
\label{eq:LDP_lsc_relaxation}
\underline C_x(\o)
\coloneqq
\iota_{\{\o_0=x\}}+\uC(\o)=
\iota_{\{\o_0=x\}}+\int_0^1\uL(\o_t,\dot\o_t)\rd t.
\end{equation}
Here we see the phase transition already appearing very naturally:
For $a\geq 1$ we have $\uL=L$ and the induced cost $\underline C=C$ feels the influence of $a$ in a nontrivial fashion.
On the other hand it is not difficult to see that for $a<1$ the relaxation is always given by $\underline L=L\vert_{a=1}$, which is somehow a monotone relation with respect to the diffusion coefficient in the trivial sense that $1=\sup\{a,\,a<1\}$.
In that case the effective rate function $\underline C=C\vert_{a=1}$ does not depend on $a$.

In \cite{sturm1997diffusion} K.T. Sturm showed that, perhaps surprisingly, diffusion processes are not always fully characterized by their intrinsic distance.
More precisely, given any process $X$ with intrinsic distance $d^2(x,y)$, he constructs a second process $\tilde X$ with \emph{strictly smaller diffusion  coefficients} and yet sharing the same intrinsic distance $\tilde d=d$ with $X$.
This monotonicity echoes our lower relaxation of the Lagrangian: for any $a<1$ the SBM $X$ has intrinsic distance given by the Euclidean cost, and picking any $\tilde a<a$ gives a different diffusion $\tilde X$ with smaller coefficients but same distance.
Clearly $a=1$ is special in this sense, since it is the larger such $a$ for which one can pick $\tilde a<a$ in this way, and $a=1$ is therefore a maximal diffusion of some sort.
This is likely not a coincidence and will be investigated in a future work \cite{BMRvR}.

On a different note, the phase transition is also suggested from the boundary conditions, about which we deliberately remained vague so far.
By definition of the Hamiltonian, a function $\phi$ is in the domain $\D(H^\eps)$ of \eqref{eq:Heps} if and only if $\exp(\phi/\eps)$ is in the domain $\D(\Q^\eps)=\D(\Q)$, or equivalently if and only if the two expression in the right-hand side of \eqref{eq:Heps} match continuously at the boundary $\pD$.
At least formally for $\eps\to 0$ this suggests
\begin{equation}
 \label{eq:nablaphi_nablaphiG}
|\nabla\phi(x)|^2=a|\nabla_\G\phi(x)|^2
\qqtext{for}x\in \pD.
\end{equation}
\begin{itemize}
 \item
For $a>1$ this simply means that, at boundary points, $\nabla\phi$ makes an angle $\alpha=\alpha(a)$ with the normal $n(x)$ given by
$$
\sin^2\alpha=\frac 1a.
$$
This angle $\alpha$ will show up again later on, and we will prove that geodesics for the intrinsic distance always enter or exit the boundary with angle of incidence $\alpha$ (see Theorem~\ref{theo:c_static_dynamic-geodesics}).
Notice that for $a=1$ this corresponds to a right-angle condition $\alpha=\pi/2$, or equivalently to the Neumann boundary condition $\partial_n\phi=0$ along $\pD$.
This is exactly the classical no-flux condition for the velocity field $v=\nabla\phi$ driving Lagrangian particles in the Benamou-Brenier formulation \cite{BB} of optimal transport.
\item
On the other hand for $a<1$ the constraint \eqref{eq:nablaphi_nablaphiG} becomes unfeasible unless the \emph{whole} gradient vanishes, $\nabla\phi|_\pD =0$.
Formally this means that particles can no longer move along the boundary (not even tangentially), which seems surprising at first sight:
choosing $x,y$ close enough on a concave or flat portion of the boundary $\pD$, on expects that minimizing curves realizing $c(x,y)=\min\limits_\o\,\int_0^1L(\o_t,\dot\o_t)\rd t$ should actually remain supported along the boundary.
This apparent paradox is yet another effect of the lack of lower semi-continuity for $a<1$, and is simply resolved by the fact that no such minimizer actually exists.
Indeed, for any curve $(\o_t)_{t\in[0,1]}$ supported along the boundary, it is easy to construct a curve $\o'\approx \o$ very close to $\o$ with the same endpoints, but moving inside $D$ for all $t\in(0,1)$ and thus with a strictly lesser cost
$$
\int_0^1L(\o'_t,\dot\o'_t)
=\frac 12\int |\dot{\o}'_t|^2\rd t
<\frac 1{2a}\int |\dot{\o}_t|^2\rd t
=\int_0^1L(\o_t,\dot\o_t).
$$
This shows that curves moving along the boundary are exponentially unlikely, as far as a putative rate function is concerned in the LDP, hence those trajectories will asymptotically never be seen  by our sticky diffusion.
\end{itemize}
To summarize, the LDP is correctly captured by \eqref{eq:LDP_lsc_relaxation} with the relaxed Lagrangian $\underline L$, and the phase transition across $a=1$ is simply the threshold determining whether $L$ is l.s.c. or not.
\\

There are two main obstructions to turning this informal approach into rigorous analysis:
\begin{enumerate}
 \item
For $a<1$ the lack of \emph{joint} lower semi-continuity of $(x,q)\mapsto L(x,q)$ (mostly in the $x$ variable) immediately prevents any rigorous application of the above abstract arguments, i.e. the chain of implications \eqref{eq:CV_Heps_to_H}$\implies$\eqref{eq:CV_Veps_to_V}$\implies$\eqref{eq:C_slice=C_dyn} is no longer justified.
Thus the lack of semicontinuity is not just an (almost aesthetic) matter of unique selection of a rate function, but rather a key step that fails in the dynamical/Lagrangian representation of $C(\o)$.
 \item
 For $a>1$ the running cost $\o\mapsto\int_0^1L(\o_t,\dot\o_t)\rd t$ is l.s.c. but the argument requires no matter what a well-posedness of the Hamilton-Jacobi problem
 $$
 \partial_t\phi+H\phi=0
 $$
in the viscosity sense (more precisely, a resolvent estimate for $\operatorname{id}-\tau H$ for small $\tau>0$ based on maximum principles, see \cite{FK06,kraaij2018large}).
In our particular context this corresponds to nonstandard, coupled interior-boundary Hamilton-Jacobi equations
$$
\begin{cases}
 \partial_t\phi + \frac 12|\nabla\phi|=0 & \text{if }x\in D,\\
 \partial_t\phi + \frac a2|\nabla_\G\phi|=0 & \text{if }x\in \pD.
\end{cases}
$$
The recent theory in \cite{barles2023modern} might allow handling such delicate systems with discontinuous Hamiltonians, but for the sake of brevity we did not pursue in this direction.
\end{enumerate}
Both obstacles are somehow orthogonal to each other: for $a\leq 1$ the Hamilton-Jacobi problem is standard but the lower semi-continuity fails, while for $a\geq 1$ the Lagrangian is lower semi-continuous but the Hamilton-Jacobi system becomes difficult to cope with.
In order to offer self-contained and rigorous proofs we opted here for a compromise and chose to work on half-spaces $D=\R^d_+$, where explicit formulas can be leveraged and standard stochastic calculus is sufficient to carry over the whole analysis.
Of course, in smooth domains the boundary looks locally as $\R^d_+$:
Because SBM still belongs to the realm of Feller diffusions with quadratic variance $\EE_x(|X_t-x|^2)\sim t$, one should expect that our results in half-spaces can be leveraged to cover general domains following a ``local-to-global'' construction, see e.g. \cite[chapters 8 and 9]{bellaiche1981geodesiques}, but this falls out of the scope of this paper
%
\section{Transition kernel in half-spaces}
\label{sec:transition_kernel}
From now on and unless otherwise specified we denote
$$
 D=\R^d_+=\R^+\times\R^{d-1}\qqtext{with}\pD\simeq\R^{d-1}
 $$
 and
 $$
 x=(x_1,x')\in D
 \qqtext{with}
 x_1\in\R^+,\,x'=(x_2\dots x_d)\in\R^{d-1}.
$$
We shall often speak of $x_1,x'$ as the horizontal and vertical coordinates, respectively.
In this particular setting the SBM process $X_t=(X^1_t,X'_t)$ takes values in $\bar D$ and \eqref{eq:SDE} takes the more explicit form
\begin{equation}
\label{eq:SDE_plane}
 \begin{cases}
  \rd X^1_t=
  \1_{\{X^1_t>0\}}\rd B^1_t +\t\1_{\{X^1_t=0\}}\rd t\\
  \rd  X'_t=\left[\sqrt a\1_{\{X^1_t=0\}}+\1_{\{X^1_t>0\}}\right]\rd B'_t\\
  X_0=x
 \end{cases},
\end{equation}
where $B=(B^1,B')$ is a standard $d$-dimensional Brownian motion.
The goal of this section is to explicitly compute the transition kernel $p_t(x,\rd y)$ of SBM in this simple planar setting, given by \eqref{eq:kernel_2d_pt} below.
\\

The key observation here is that the horizontal $X^1$ evolution is uncoupled from the vertical $X'$ motion and corresponds to the 1-dimensional Sticky-reflected Brownian Motion studied in \cite{engelbert2014stochastic}.
In particular the local time at the boundary of the full process is just given by the local time at the origin for the first component, which we simply denote
$$
L_t=L^\G_t(X)=L^0_t(X^1).
$$
By \cite[\S 3 and theorem 5]{engelbert2014stochastic} the first equation in \eqref{eq:SDE_plane} encodes among other things a relation
$$
\rd L^0_t(X^1)=\t \1_{\{X^1_t=0\}}\rd t=\t \rd O_t
$$
between the occupation time
$$
O_t
\coloneqq
\int_0^t\1_\G(X_s)\rd s
=
\int_0^t\1_{\{X^1_s=0\}}\rd s
\qquad \in [0,t]
$$
and local time $L_t$, whence
\begin{equation}
\label{eq:L=theta_O}
L_t=\t O_t
\qquad
\in [0,\t t].
\end{equation}
Note that in \eqref{eq:SDE_plane} the vertical diffusion $X'$ is only coupled to the horizontal motion of $X^1$ through the volatility $\sigma_t=\sigma(X^1_t)=\sqrt a\1_{\{X^1_t=0\}}+\1_{\{X^1_t>0\}}$, with $X^1$ independent of $B'$.
By the strong Markov property and
$$
\int_0^t\1_{\{X^1_s=0\}}\rd s=O_t
,
\hspace{1cm}
\int_0^t\1_{\{X^1_s>0\}}\rd s
=\int_0^t\left[1-\1_{\{X^1_s=0\}}\right]\rd s=t-O_t,
$$
we can integrate $\rd  X'_t=\left[\sqrt a\1_{\{X^1_t=0\}}+\1_{\{X^1_t>0\}}\right]\rd B'_t$ explicitly as
\begin{equation}
\label{eq:X'_distrib_sum_BM}
X'_t = \sqrt a \tilde B'_{O_t}+\hat B'_{t-O_t}
\end{equation}
almost surely, where $\tilde B',\hat B'$ are independent $(d-1)$-dimensional Brownian motions, also independent of $X^1$ and therefore of the occupation time $O_t$.

In order to eliminate the invariance under $\R^{d-1}$-vertical translation we always start from
$$
X_0=(X^1_0,X'_0)=(x_1,0)
\qqtext{for }x_1\geq 0.
$$
We first condition on $(X^1_t,L_t)=(z,l)$ as
\begin{multline*}
p_t(x,\rd y) = \PP_x(X_t\in \rd y)
=
\PP_x(X^1_t\in \rd y_1,X'_{t}\in \rd y')
\\
=\int_{\R^+}\int_0^{\theta t}
\PP_x(X^1_t\in \rd y_1,X'_{t}\in \rd y'\,\big\vert\, X^1_t=z,L_t=l)\,\PP_{x_1}(X^1_t\in \rd z,L_t\in \rd l)
\\
=\int_{\R^+}\int_0^{\theta t}
\PP_x\left(X^1_t\in \rd y_1,X'_{t}\in \rd y'\,\Big\vert\, X^1_t=z,O_t=\frac 1\theta l\right)\,\PP_{x_1}(X^1_t\in \rd z,L_t\in \rd l).
\end{multline*}
The point here is that the bivariate distribution $(X^1,L)$ for the 1-dimensional SBM in the conditioning can be determined explicitly.
More precisely, for $x_1,z\geq 0$ let $T_0$ denote the first hitting time at the origin for 1D-Brownian motion, denote by
$$
h(t,x_1)
\coloneqq \frac{|x_1|}{\sqrt{2\pi}t^{\frac 32}} e^{-\frac{|x_1|^2}{2t}}
\qquad t\geq 0
$$
its distribution (i-e $\PP_{x_1}(T_0\in \rd t)=h(t,x_1)\rd t$), and let
$$
g^0_t(x_1,z)
\coloneqq
\frac{1}{(2\pi t)^{\frac 12}}\left[e^{-\frac{|x_1-z|^2}{2t}} - e^{-\frac{|x_1+z|^2}{2t}}\right],
\qquad z\in \R^+
$$
be the kernel of Brownian motion before killing at the origin (i-e $\PP_{x_1}(B^1_t\in\rd z,T_0<t)=g^0_t(x_1,z)\rd z$).
By \cite[Theorem 2]{casteras2023trivariate} there holds
\begin{multline}
\label{eq:bivariate_x}
\PP_{x_1}(X^1_t\in\rd z,\,L_t\in \rd l)
=g^0_t(x_1,z)\,\rd z \delta_0(\rd l)
\\
+
\frac{1}{\theta}h\left(t-\frac l\theta,l+x_1\right)\delta_0(\rd z)\rd l
+
2 h\left(t-\frac l\theta,l+x_1+z\right) \rd z\,\rd l,
\end{multline}
with implicitly $z\geq 0$ and $l\in [0,\theta t]$ as in \eqref{eq:L=theta_O}.
On the other hand, writing
$$
g(t,z')=\frac{1}{(2\pi t)^{\frac{d-1}2}}\exp\left(-\frac{|z'|^2}{2 t}\right),
\qquad z'\in \R^{d-1}
$$
for the standard $(d-1)$-Gaussian, we can also compute
\begin{multline*}
\PP_x\left(X^1_t\in \rd y_1,X'_t\in \rd y'\,\Big\vert\, X^1_t=z,O_t=\frac 1\theta l\right)
=
\delta_{z}(\rd  y_1) \PP_{x'}\left(\sqrt a \tilde B'_{O_t}+\hat B'_{t-O_t}\in \rd y'\,\Big\vert\,O_t=\frac 1\theta l\right)
\\
=\delta_{z}(\rd  y_1)  \PP_{x'}\left(\sqrt a\tilde B'_{\frac{ l}\theta}+\hat B'_{t-\frac{l}{\theta}}\in \rd y'\right)
=
\delta_{z}(\rd  y_1)  \PP_{x'}\left(\tilde B'_{a\frac{ l}\theta}+\hat B'_{t-\frac{l}{\theta}}\in \rd y'\right)
\\
=
\delta_{z}(\rd  y_1) g\left(a\frac{ l}\theta+ t-\frac{l}\theta,y'-x'\right)\rd y',
\end{multline*}
because the sum of independent Gaussian variables $\tilde B+\hat B$ remains Gaussian with additive variance.
Putting
$$
\ba\coloneqq a-1\in (-1,\infty)
$$
and gathering everything, we end-up with
\begin{multline}
\label{eq:kernel_2d_ptt}
p_t(x,\rd y)=
g^0_t(x_1,y_1)g(t,y'-x')\,\rd y_1\rd y'
\\
+\frac 1\theta\left(\int_0^{\theta t} h\left(t-\frac l\theta,l+x_1\right)g\left(t+\ba\frac l\theta,y'-x'\right)\rd l\right) \delta_0(\rd  y_1)\rd y'
\\
+
2\left(\int_0^{\theta t} h\left(t-\frac l\theta,l+x_1+y_1\right)g\left(t+\ba\frac l\theta,y'-x'\right)\rd l\right)\rd y_1\rd y'.
\end{multline}
Owing to $L_t=\t O_t\in[0,\t t]$ the local time runs in $l\in [0,\t t]$ above, hence since $\ba>-1$ the effective time arguments $t-\frac l\t\geq 0$ and $t+\ba\frac l\t\geq  t-\frac l\t\geq 0$ in $h,g$ remain nonnegative as they should.
Similarly we emphasize that all the time arguments will be nonnegative in the sequel, and this will be implicit throughout without any further mention.
In our simple planar setting the stationary measure \eqref{eq:def_stationary_measure} decomposes as
$
\mu(\rd y)=\left(\rd y_1+\frac{1}{2\t}\delta_0(\rd y_1)\right)\rd y'
$.
Recalling that $g^0(x_1,0)=0$ for killed Brownian motion, \eqref{eq:kernel_2d_ptt} can be finally be written more compactly as

\begin{equation}
  \label{eq:kernel_2d_pt}
	\addtolength{\fboxsep}{2pt}
			\boxed{
				\begin{split}
					p_t(x,\rd y)
                        &=
                        \Bigg[
                        g^0_t(x_1,y_1)g(t,y'-x')
                        \\
                        &\hspace{2cm}+2\int_0^{\theta t} h\left(t-\frac l\theta,l+x_1+y_1\right)g\left(t+\ba\frac l\theta,y'-x'\right)\rd l \Bigg]\mu(\rd y).
				\end{split}
                  }
\end{equation}

\section{Static Large Deviation Principle}
\label{sec:static}
As already discussed, the short time behaviour will be captured at the static level by the LDP for the slowed down transition kernels
\begin{equation}
\label{eq:def_rhoeps_2d}
\rho^{\eps}_x(\rd y)\coloneqq p_\eps(x,\rd y)\hspace{1cm}\in \P(\bar D),
\end{equation}
where $x\in\bD$ is fixed and $p_\eps(x,\rd y)$ is given by \eqref{eq:kernel_2d_pt} with $t=\eps$.
The main goal of this section is to establish
\begin{theo}
\label{theo:LDP_static}
Let $a>0$ and write for convenience $A=a-1$.
For any fixed $x=(x_1,x')\in\bar D=\R^+\times\R$ the sequence $\left\{ \rho^\eps_x\right\}_{\eps>0}\in \P(\bar D)$ from \eqref{eq:def_rhoeps_2d} satisfies the LDP
$$
\boxed{\rho_x^\eps(\rd y)\underset{\eps\to 0}{\asymp} \exp\left(-\frac 1\eps c(x,y)\right)}
$$
with good rate function $y\mapsto c(x,y)$ given by
\begin{align}
\label{eq:c_mu<0}
\mbox{if }a\leq 1:&
\hspace{1cm}   c(x,y)\coloneqq \frac 12 |x-y|^2
\medskip
\\
\label{eq:c_mu>0}
\mbox{if }a>1:
 & \hspace{1cm} c(x,y)\coloneqq
 \frac 12
\begin{cases}
|x-y|^2 & \mbox{if }(x,y)\in \mathfrak C,\\
\frac{1}{a}\left(\sqrt{\ba} |x_1+y_1| + |y'-x'|\right)^2&  \mbox{else},
\end{cases}
\end{align}
where for $A=a-1>0$ the ``cone'' $\mathfrak C$ is
\begin{equation}
\label{eq:def_cone}
 \cone\coloneqq
 \Bigg\{(x,y)\in \bar D^2:\qquad|y'-x'|\leq \frac{1}{\sqrt{\ba}}\Big[|x_1+y_1|+2\sqrt{a}\sqrt{x_1 y_1}\Big]\Bigg\}.
\end{equation}
Moreover $(x,y)\mapsto c(x,y)$ is symmetric and continuous over $\bar D^2$.
\end{theo}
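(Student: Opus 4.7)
The starting point is the explicit formula \eqref{eq:kernel_2d_pt}, which writes the density of $\rho^\eps_x=p_\eps(x,\cdot)$ with respect to $\mu$ as the sum of a \emph{bulk} contribution
$$
p^I_\eps(x,y)=g^0_\eps(x_1,y_1)\,g(\eps,y'-x')
$$
from trajectories staying in the open half-space $D$, and a \emph{boundary} contribution
$$
p^B_\eps(x,y)=2\int_0^{\theta\eps}h\!\big(\eps-\tfrac l\theta,\,l+x_1+y_1\big)\,g\!\big(\eps+\ba\tfrac l\theta,\,y'-x'\big)\,\rd l
$$
from excursions spending occupation time $l/\theta$ on the boundary $\pD$. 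The strategy is to extract sharp Laplace asymptotics $\eps\log p_\eps(x,y)\to -c(x,y)$ for each term separately, identify $c(x,y)$ as the smaller of the two exponential rates, and finally upgrade these pointwise kernel estimates to a full LDP via the Gaussian exponential tails.

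\textbf{Bulk and boundary asymptotics.} Direct Gaussian bounds on the Dirichlet kernel $g^0_\eps$ give $p^I_\eps(x,y)\asymp \eps^{-d/2}\exp(-\tfrac{|x-y|^2}{2\eps})$ locally uniformly for $y$ in the interior, and $p^I_\eps(x,y)=0$ on $\{y_1=0\}$, producing the bulk rate $\tfrac 12 |x-y|^2$. For the boundary integral I rescale $l=\eps\theta s$ with $s\in[0,1]$ to expose the $1/\eps$ scale:
$$
p^B_\eps(x,y)\;\propto\;\eps^{-(d-1)/2}\int_0^1 \frac{\eps\theta s+x_1+y_1}{(1-s)^{3/2}(1+\ba s)^{(d-1)/2}}\exp\!\left(-\tfrac{1}{\eps}\Phi(s,\eps)\right)\rd s,
$$
with $\Phi(s,\eps)\to\Phi_0(s)\coloneqq\tfrac{(x_1+y_1)^2}{2(1-s)}+\tfrac{|y'-x'|^2}{2(1+\ba s)}$ pointwise as $\eps\to 0$. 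Laplace's method then yields $-\eps\log p^B_\eps(x,y)\to \min_{s\in[0,1]}\Phi_0(s)$. Writing $\alpha=x_1+y_1$ and $\beta=|y'-x'|$, the convexity of $\Phi_0$ combined with
$$
\Phi_0'(s)=\tfrac{\alpha^2}{2(1-s)^2}-\tfrac{\ba\beta^2}{2(1+\ba s)^2}
$$
shows that an interior critical point $s^\star\in(0,1)$ exists precisely when $\ba\beta^2>\alpha^2$, in which case $\Phi_0(s^\star)=\tfrac{(\sqrt{\ba}\alpha+\beta)^2}{2a}$; otherwise the minimum is attained at $s=0$ with value $\tfrac{\alpha^2+\beta^2}{2}$.

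\textbf{Identification of $c$ and of the cone $\cone$.} Since $\alpha^2\geq(x_1-y_1)^2$ the endpoint value $\Phi_0(0)$ always dominates $\tfrac 12|x-y|^2$, so the bulk rate wins whenever the minimum of $\Phi_0$ sits at $s=0$. This happens unconditionally when $a\leq 1$ because $\ba\leq 0$ forces $\Phi_0'\geq 0$, so the sticky boundary never contributes to leading order and $c(x,y)=\tfrac 12 |x-y|^2$ as in \eqref{eq:c_mu<0}. For $a>1$ the remaining comparison boils down, after expanding both sides and using $\ba=a-1$, to the algebraic identity
$$
\tfrac 12 |x-y|^2-\tfrac{(\sqrt{\ba}\alpha+\beta)^2}{2a}=\tfrac{(\sqrt{\ba}\beta-\alpha)^2-4a\,x_1y_1}{2a}.
$$
In the interior-critical regime $\sqrt{\ba}\beta>\alpha$, the sign of the right-hand side is that of $\sqrt{\ba}\beta-\alpha-2\sqrt{a\,x_1y_1}$, which is exactly the defining inequality of the cone $\cone$ in \eqref{eq:def_cone}: inside $\cone$ the bulk cost wins, outside the boundary cost wins, and the two branches agree on $\partial\cone$, yielding continuity of $c$ across the transition. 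Symmetry $c(x,y)=c(y,x)$ is then manifest on each branch of \eqref{eq:c_mu>0}, and joint continuity elsewhere on $\bar D^2$ is routine.

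\textbf{From pointwise estimates to the LDP.} The Laplace expansions above can be shown to hold locally uniformly in $y\in\bar D$ with at most polynomial prefactors, yielding matched upper and lower bounds $\eps\log p_\eps(x,y)=-c(x,y)+o(1)$. Combined with the Gaussian tail estimate $\rho_x^\eps(\{|y-x|>R\})\lesssim e^{-R^2/(4\eps)}$ for large $R$, read off directly from \eqref{eq:kernel_2d_pt}, this exponential tightness promotes the pointwise kernel bounds into the full LDP on $\bar D$ with good rate function $c$. The main technical challenge lies in controlling the uniformity of the Laplace estimates across the degenerate configurations: the boundary $\partial\cone$ (where the critical point $s^\star$ collides with the endpoint $s=0$), the corners $x_1=0$ or $y_1=0$ (where $s^\star\to 1$ and the local-time integrand becomes singular), and the diagonal $y\to x$ (where both candidate rates vanish simultaneously).
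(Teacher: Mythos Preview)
Your proposal is essentially the paper's own argument: the same bulk/boundary decomposition of the kernel, the same rescaling $l=\eps\theta s$, the same identification of the boundary rate as $\min_{s\in[0,1]}\Phi_0(s)$ via convexity, and the same algebra leading to the cone $\cone$ (the paper packages this as Lemmas~\ref{lem:I2_epxlicit} and~\ref{lem:min_I1_I2}). The only real divergence is in the final step. You aim for \emph{pointwise} asymptotics $\eps\log p_\eps(x,y)\to -c(x,y)$ locally uniformly, then invoke exponential tightness to promote these to an LDP. The paper instead bypasses pointwise density control entirely and estimates $\rho^\eps_x(E)$ directly for an arbitrary Borel set $E$: for the upper bound it bounds $\int_E\rho^\eps_{\mathrm{int}}$ and $\int_E\rho^\eps_{\mathrm{st}}$ by $C\eps^{-p}e^{-\hat\lambda/\eps}$ via global inequalities (in particular absorbing the $(1-L)^{-3/2}$ singularity into an integrable factor), and for the lower bound it localises to a small neighbourhood of a minimiser $(y^*,L^*)$ and bounds the integrand below crudely.

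The paper's route is slightly more robust precisely at the places you flag as the ``main technical challenge'': the degenerate configurations $s^\star\to 0$, $s^\star\to 1$, $x_1y_1=0$, and the $(1-s)^{-3/2}$ blow-up of the prefactor are all absorbed automatically when one integrates over sets, whereas in your pointwise approach they genuinely obstruct \emph{uniform} Laplace estimates (e.g.\ at $x_1=y_1=0$ the prefactor $\eps\theta s+x_1+y_1$ vanishes at leading order and the critical point sits at the singular endpoint $s=1$, so standard Laplace does not apply cleanly). Your plan is correct in spirit and would work with enough care, but the paper's direct set-level estimates are the cleaner way to close exactly the gaps you identify.
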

\noindent
Note that $c$ strongly depends on $a$ (in a continuous way), but not on $\t$.
This might first come as a surprise, but we shall make a case later on that $c(x,y)=d^2(x,y)$ is truly the intrinsic distance for the SBM diffusion.
On the other hand the $\t$-``stickiness'' clearly appears as a lower order drift term in \eqref{eq:SDE}, and intrinsic distances classically tend to depend on the purely diffusive part of the process only.

For later purposes it will be convenient to define
\begin{equation}
\label{eq:def_cone(x)}
\cone(x)\coloneqq \{y\in \bar D:\quad (x,y)\in \cone\},
\end{equation}
which is represented graphically in Figure~\ref{fig:cones} below.
Note that $\cone(x)$ is really a cone if $x_1=0$ with aperture exactly $\sin^2\alpha=\frac 1a$, while it is rather a horizontal paraboloid if $x_1>0$.
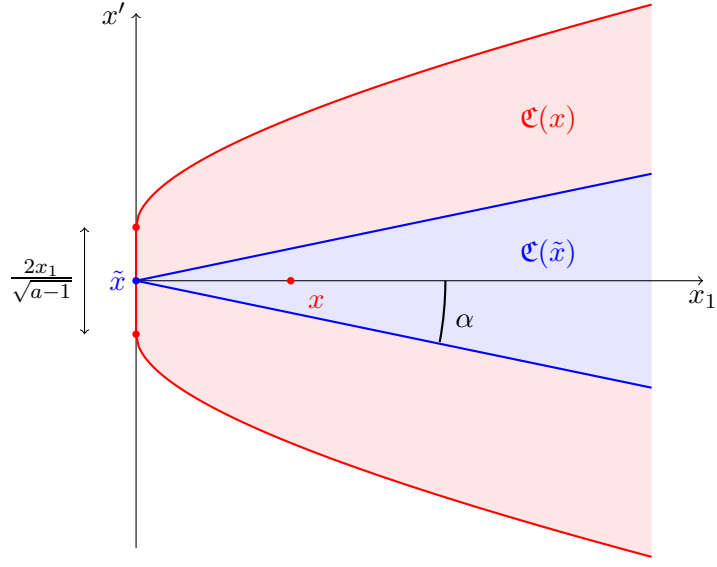
\begin{figure}[h!]
\begin{center}
\begin{tikzpicture}[scale=1.5]
\begin{axis}[ticks=none,axis lines=none,
xmin=-3, xmax=12,
ymin=-6, ymax=6
]
\draw [->] (axis cs:0,0) -- (axis cs:11,0);
\draw [->] (axis cs:0,-5) -- (axis cs:0,5);

\addplot [red,domain=0:10, samples=200, name path=f, thick]
{1+.1*x+sqrt(x)};
\addplot [red,domain=0:10, samples=200, name path=g, thick]
{-1-.1*x-sqrt(x)};
\draw [red,thick] (axis cs:0,-1) -- (axis cs:0,1);
\node at (axis cs:0,1)[red,circle,fill,inner sep=1pt]{};
\node at (axis cs:0,-1)[red,circle,fill,inner sep=1pt]{};
\draw [<->] (axis cs:-1,-1) -- (axis cs:-1,1);
\node [left] at (axis cs:-1,0) {$\frac{2x_1}{\sqrt{a-1}}$};

\addplot[red!10, opacity=0.4] fill between [of= f and g];
\node at (axis cs:3,0)[red,circle,fill,inner sep=1pt]{};
\node [red] at (axis cs:3.5,-.4) {$x$};
\node [red] at (axis cs:8,3) {$\mathfrak C(x)$};

\addplot [blue,domain=0:10, samples=200, name path=ff, thick]
{.2*x};
\addplot [blue,domain=0:10, samples=200, name path=gg, thick]
{-.2*x};
\addplot[blue!10, opacity=0.4] fill between [of= ff and gg];
\node at (axis cs:0,0)[blue,circle,fill,inner sep=1pt]{};
\node [blue,left] at (axis cs:0,0) {$\tilde x$};
\node [blue] at (axis cs:8,.5) {$\mathfrak C(\tilde x)$};

\node [below] at (axis cs:11,0) {$x_1$};
\node [left] at (axis cs:0,5) {$x'$};

\draw [thick] (axis cs:6,0) arc [radius=6,start angle=0,end angle=-11];
\node at (axis cs:6,-.75)[right]{$\alpha$};

\end{axis}
  \end{tikzpicture}
\end{center}
\caption{The cone $\mathfrak C(x)$.}
\label{fig:cones}
\end{figure}
It is worth stressing that the quantity $|x_1+y_1|$ appearing in\eqref{eq:c_mu>0}\eqref{eq:def_cone} can be naturally interpreted as a horizontal travel distance from $x_1$ to $y_1$ when forced to go through $z_1=0$.
This corresponds to a sticky scenario, where most random trajectories actually tend to go through the sticky boundary $z_1=0$ in order to move faster in the vertical direction by taking advantage of a stronger diffusion along the boundary if $a>1$.
Accordingly, a delicate balance will tip in favor of the ``going through the boundary'' strategy or not, depending on 1) the value of the diffusivity $a$, and 2) the relative location of the points $x,y$ to be connected (here $(x,y)\in \cone$ or not).
At a more technical level, let us anticipate that we will decompose below $\rho^\eps_x=\rho^\eps_\text{int}+\rho^\eps_\text{st}$ as the sum of an ``interior'' kernel (corresponding to paths avoiding the boundary) and of a ``sticky'' kernel (corresponding to paths with positive sojourn along $\pD$).
The interior part will always lead to a standard Euclidean contribution $\rho^\eps_\text{int}\asymp\exp(-\frac 1\eps I_\text{int}(x,y))$ with rate function $I_{\text{int}}=\frac 12|y-x|^2$.
The second part involves local time accounting for stickiness, and leads to a contribution $\rho^\eps_\text{st}\asymp\exp(-\frac 1\eps I_\text{st}(x,y))$ for a more complicated rate function $I_\text{st}$ that strongly feels the influence of the diffusion coefficient $a>0$.
Depending on the precise value of $a$ and relative positions of $x,y$, one or the other rate function will overtake, corresponding to the dichotomy in our statement.

In order to make the strategy of proof more precise we first change time scale
$$
l=\eps\theta L\qqtext{with}
L\in[0,1].
$$
From there \eqref{eq:kernel_2d_pt} conveniently leads to
\begin{align}
\rho^{\eps}_x(\rd y)
 & =
\Bigg[g^0_\eps(x_1,y_1)g(\eps,y'-x')\notag
\\
& \hspace{1cm} + \int_0^{\theta \eps} h\left(\eps-\frac l\theta,l+x_1+y_1\right)g\left(\eps+\ba\frac l\theta,y'-x'\right)\rd l\,\Bigg] \mu(\rd y)
\notag
\\
 & =
 \Bigg[ g^0_\eps(x_1,y_1)g(\eps,y'-x')\notag
\\
& \hspace{1cm}+   \int_0^1 h\left(\eps(1-L),\eps\theta L+x_1+y_1\right)g\left(\eps(1+\ba L),y'-x'\right)
 \,\eps\theta\rd L\Bigg] \mu(\rd y)\notag
\\
& =
\Big[\rho^\eps_\text{int}(x,y)+\rho^\eps_\text{st}(x,y)\Big]\mu(\rd y),
\label{eq:def_Reps_2d}
\end{align}
where the interior and sticky densities are defined as
\begin{multline}
\label{eq:def_rho1}
\rho^\eps_\text{int}(x,y) \coloneqq g^0_\eps(x_1,y_1)g(\eps,y'-x')
\\
= \frac{1}{(2\pi \eps)^{d/2}}
\left[\exp\left(-\frac{|x_1-y_1|^2}{2\eps}\right)-\exp\left(-\frac{|x_1+y_1|^2}{2\eps}\right)\right]\exp\left(-\frac{|x'-y'|^2}{2\eps}\right)
\end{multline}
and
\begin{equation}
\label{eq:def_rho2_rhobar2}
\rho^\eps_\text{st}(x,y)\coloneqq \int_0^1\bar\rho^\eps_\text{st}(x,y,L)\rd L,
\end{equation}
with
\begin{multline}
 \label{eq:def_rho2}
\bar\rho^\eps_\text{st}(x,y,L)
\coloneqq \eps\theta h\left(\eps(1-L),\eps\theta L+x_1+y_1\right)g\left(\eps(1+\ba L),y'-x'\right)
\\
= \frac{\theta}{(2\pi\eps)^\frac d2}\times
\frac{\eps\theta L +x_1+y_1}{(1-L)^{\frac 32}(1+\ba L)^{\frac{d-1}2}}
\exp\left(-\frac{|\eps\theta L +x_1+y_1|^2}{2\eps(1-L)}\right)\exp\left(-\frac{|x'-y'|^2}{2\eps(1+\ba L)}\right).
\end{multline}

For the interior contribution \eqref{eq:def_rho1} one has $|x_1-y_1|<|x_1+y_1|$ for all $y_1$ (except for $y_1=0$ for which $g^0_\eps$ vanishes anyway), hence one expects $\exp\left(-\frac{|x_1-y_1|^2}{2\eps}\right)\gg \exp\left(-\frac{|x_1+y_1|^2}{2\eps}\right)$ and therefore one should anticipate
\begin{equation}
\label{eq:def_Iint}
\rho^\eps_\text{int}(x,y)
\asymp \exp\left(-\frac{1}{\eps}I_\text{int}(x,y)\right),
\hspace{1cm}
I_\text{int}(x,y)\coloneqq\frac 12 |x-y|^2.
\end{equation}
Viewing $\bar \rho^\eps_\text{st}(x,y,L)$ as acting on $L\in [0,1]$ as well, one reads off at least formally from \eqref{eq:def_rho2}
\begin{equation}
\label{eq:def_I2bar}
\bar \rho^\eps_\text{st}(x,y,L)
\asymp \exp\left(-\frac{1}{\eps}\bar I_\text{st}(x,y,L)\right),
\hspace{1cm} \bar I_\text{st}(x,y,L)\coloneqq \frac{|x_1+y_1|^2}{2(1-L)}+\frac{|x'-y'|^2}{2(1+\ba L)}.
\end{equation}
The contraction principle \cite[thm. 4.2.1]{dembo2009large} thus suggests
\begin{equation}
\label{eq:def_I2}
\rho^\eps_\text{st}(x,y)=\int_0^1\bar \rho^\eps_\text{st}(x,y,L)\rd L\asymp \exp\left(-\frac{I_\text{st}(x,y)}{\eps}\right),
\hspace{1cm} I_\text{st}(x,y)\coloneqq \min\limits_{L\in[0,1]}\bar I_\text{st}(x,y,L).
\end{equation}
Finally, the sum should at least formally satisfy an LDP with rate given by the usual rule of the ``least unlikely of the unlikely'', here
\begin{multline*}
 \rho^\eps_x(\rd y)=\Big[\rho^\eps_\text{int}(x,y) + \rho^\eps_\text{st}(x,y) \Big]\mu(\rd y)
\asymp \exp\left(-\frac{1}{\eps}c(x,y)\right)
\\
\text{with}\hspace{1cm}
c(x,y)=\min\left\{I_\text{int}(x,y),I_\text{st}(x,y)\right\}.
\end{multline*}
It turns out that for $a\leq 1\Leftrightarrow \ba\leq 0$ one always has $I_\text{int}\leq I_\text{st}$, while for $a >1$ one has $I_\text{st}<I_\text{int}$ if and only if $(x,y)\in \cone$ exactly as in \eqref{eq:c_mu<0}\eqref{eq:c_mu>0}.
This line of thought really gives the correct result, but requires special technical care due to the $\frac{1}{1-L}$ singularity appearing both in the exponential rate and multiplicative prefactor in \eqref{eq:def_rho2} and, to a lesser extent, to the atom at $y_1=0$ in $\mu(\rd y)$.
\\

Let us now make this sketch of proof rigorous.
For the sake of exposition we first establish two technical lemmas in order to fully determine $I_\text{int},I_\text{st}$ and $c=\min\{I_\text{int},I_\text{st}\}$ as functions of $x,y$.
\begin{lem}
 \label{lem:I2_epxlicit}
 Let $a>0$ and $\ba=a-1$.
 The sticky rate $I_\text{st}$ defined by \eqref{eq:def_I2bar}\eqref{eq:def_I2} reads
 \begin{equation}
 \label{eq:I_st_explicit}
 I_\text{st}(x,y)=\frac 12
 \begin{cases}
 |x_1+y_1|^2+|x'-y'|^2 & \mbox{if }a\leq 1\\
|x_1+y_1|^2+|x'-y'|^2 & \mbox{if }a> 1\mbox{ and }|y'-x'|\leq \frac 1{\sqrt{\ba}}|x_1+y_1|\\
\frac{1}{a}\left(\sqrt{\ba} |x_1+y_1|+|x'-y'|\right)^2 & \mbox{if }a> 1\mbox{ and }|y'-x'|>\frac 1{\sqrt{\ba}}|x_1+y_1|
 \end{cases}
\end{equation}
and is continuous in $y$.
\end{lem}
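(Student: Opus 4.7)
The statement is a one-variable calculus exercise: for fixed $x,y$ one must minimize
\begin{equation*}
\bar I_\text{st}(x,y,L) = \frac{\alpha^2}{2(1-L)} + \frac{\beta^2}{2(1+AL)}, \qquad \alpha := x_1 + y_1 \geq 0, \quad \beta := |y'-x'| \geq 0,
\end{equation*}
over $L \in [0,1]$, where $A = a-1 \in (-1,\infty)$. I would first split into cases according to the sign of $A$.

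\textbf{Case $a \leq 1$ (i.e.\ $A \leq 0$).} When $A = 0$ the second term is constant and the first strictly increasing, so the minimum is attained at $L = 0$. When $A < 0$, one has
\begin{equation*}
\partial_L \bar I_\text{st} = \frac{\alpha^2}{2(1-L)^2} - \frac{A\beta^2}{2(1+AL)^2} \geq 0,
\end{equation*}
since both terms are nonnegative (note $1+AL > 0$ for $L \in [0,1]$ because $A > -1$). Hence $\bar I_\text{st}(\cdot)$ is nondecreasing and the minimum is again $\bar I_\text{st}(x,y,0) = \frac{1}{2}(\alpha^2 + \beta^2)$, giving the first line of \eqref{eq:I_st_explicit}.

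\textbf{Case $a > 1$ (i.e.\ $A > 0$).} The first-order condition $\partial_L \bar I_\text{st} = 0$ reduces to $\alpha(1+AL) = \sqrt{A}\,\beta\,(1-L)$, yielding the unique critical point
\begin{equation*}
L^* = \frac{\sqrt{A}\,\beta - \alpha}{\sqrt{A}\,(\sqrt{A}\,\alpha + \beta)}.
\end{equation*}
A direct computation shows $\bar I_\text{st}$ is strictly convex in $L$, so $L^*$ is a global minimizer whenever $L^* \in [0,1]$, which is equivalent to $\beta \geq \alpha/\sqrt{A}$. In that regime, substituting back the explicit values
\begin{equation*}
1 - L^* = \frac{a\,\alpha}{\sqrt{A}(\sqrt{A}\,\alpha + \beta)}, \qquad 1 + A L^* = \frac{a\,\beta}{\sqrt{A}\,\alpha + \beta}
\end{equation*}
produces, after simplification, $\bar I_\text{st}(x,y,L^*) = \frac{1}{2a}(\sqrt{A}\,\alpha + \beta)^2$, matching the third line of \eqref{eq:I_st_explicit}. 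If instead $\beta < \alpha/\sqrt{A}$, then $L^* < 0$ and, by convexity, $\bar I_\text{st}$ is nondecreasing on $[0,1]$, so the minimum is attained at $L = 0$ with value $\frac{1}{2}(\alpha^2 + \beta^2)$, matching the second line.

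\textbf{Continuity.} I would close by checking that the two expressions in the $a > 1$ case agree on the interface $\beta = \alpha/\sqrt{A}$: both give $\frac{\alpha^2(A+1)}{2A} = \frac{a\alpha^2}{2A}$, using $(\sqrt{A} + 1/\sqrt{A})^2 = a^2/A$. Continuity in $y$ then follows from the continuity of $\alpha, \beta$ in $y$ and the continuity of each piecewise expression. I do not expect serious obstacles; the only point that needs mild care is the strict convexity argument used to ensure $L^*$ is the global minimizer and to justify that $\bar I_\text{st}$ is monotone on $[0,1]$ when $L^* < 0$, both of which follow from $\partial_L^2 \bar I_\text{st} > 0$ on $(0,1)$.
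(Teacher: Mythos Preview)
Your proof is correct and follows essentially the same route as the paper: both argue by convexity of $L\mapsto \bar I_\text{st}(x,y,L)$ and the sign of its derivative, with you being more explicit about the critical point $L^*$ and the substitution, where the paper simply invokes a ``tedious but straightforward computation.'' One tiny edge case worth a parenthetical remark: when $\alpha=x_1+y_1=0$ your formula gives $L^*=1$ and the substitution $\alpha^2/\bigl(2(1-L^*)\bigr)$ is formally $0/0$; the paper handles this separately (the minimum is then $\beta^2/(2a)$, which still matches your third-line expression).
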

\begin{proof}
For fixed $x,y$ (with $x_1,y_1\geq 0$) let
$$
f(L)\coloneqq \frac{|x_1+y_1|^2}{1-L}+\frac{|x'-y'|^2}{1+\ba L},
$$
so that by definition $I_\text{st}(x,y)=\frac 12 \min\limits_{L\in[0,1]}f(L)$ in \eqref{eq:def_I2}.

In the easy case $\ba\leq 0\iff a\leq 1$ the function $f$ is monotone nondecreasing and clearly the minimum is attained for $L=0$ with value $I_\text{st}(x,y)=\frac 12 f(0)=\frac{1}{2}(|x_1+y_1|^2+|x'-y'|^2)$.

Consider now the case $\ba>0$, and observe that $f$ is convex.
If $f'(0)=|x_1+y_1|^2 -\ba|x'-y'|^2\geq 0$ then by convexity $f$ is minimized again at $L=0$ and leads to the exact same value as above.
This is exactly the second alternative in our statement.

Let us finally deal with the last case $f'(0)<0$, which is exactly our third alternative in \eqref{eq:I_st_explicit}.
Assume that $x_1+y_1>0$: then $f(1^-)=+\infty$, and by strict convexity $f$ is minimized for a unique $L^*\in(0,1)$.
A tedious but straightforward computation allows to characterize this critical point $f'(L^*)=0$ as the unique positive root of a certain quadratic polynomial in $L$ and leads to the precise value $f(L^*)=\frac{1}{a}\left(\sqrt\ba |x_1+y_1|+|x'-y'|\right)^2$ as in our statement.
It remains to observe that the very same formula remains valid also if $x_1+y_1=0$, which is a particularly easy case when $f(L)=\frac{|x'-y'|^2}{1+\ba L}$ is obviously minimized by $f(1)=\frac{|x'-y'|^2}{1+\ba}=\frac{|x'-y'|^2}{a}$.

Those three cases are illustrated in Figure~\ref{fig:cones_I2}.

Finally, the continuity of $y\mapsto I_\text{st}(x,y)$ is easy to check in view of the explicit formula \eqref{eq:I_st_explicit} and the proof is complete.
\end{proof}
\begin{lem}
\label{lem:min_I1_I2}
The cost
$$
c(x,y)\coloneqq \min\{I_\text{int}(x,y),I_\text{st}(x,y)\}
$$
is given explicitly by \eqref{eq:c_mu<0}\eqref{eq:c_mu>0}.
\end{lem}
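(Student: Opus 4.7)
The plan is to compare $I_\text{int}(x,y)=\tfrac12(|x_1-y_1|^2+|x'-y'|^2)$ with the three-branch formula for $I_\text{st}(x,y)$ from Lemma~\ref{lem:I2_epxlicit}, and show that the branch which wins the minimum coincides with the region described in \eqref{eq:c_mu<0}--\eqref{eq:c_mu>0}.

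First, I would dispatch the case $a\leq 1$ and the ``near-axis'' sub-case of $a>1$ simultaneously. In both situations Lemma~\ref{lem:I2_epxlicit} gives $I_\text{st}(x,y)=\tfrac12(|x_1+y_1|^2+|x'-y'|^2)$. Since $x_1,y_1\geq 0$ imply $|x_1+y_1|\geq|x_1-y_1|$, one immediately has $I_\text{st}\geq I_\text{int}$ with equality iff $x_1 y_1=0$. In particular $c=I_\text{int}=\tfrac12|x-y|^2$. When $a>1$, the constraint $|y'-x'|\leq\tfrac1{\sqrt{\ba}}|x_1+y_1|$ defining this branch is easily seen to lie inside the cone \eqref{eq:def_cone} (since $2\sqrt a\sqrt{x_1 y_1}\geq 0$), so this matches the ``$(x,y)\in\cone$'' alternative in \eqref{eq:c_mu>0}.

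The substantive case is $a>1$ together with $|y'-x'|>\tfrac1{\sqrt{\ba}}|x_1+y_1|$, where $I_\text{st}(x,y)=\tfrac1{2a}\bigl(\sqrt\ba|x_1+y_1|+|x'-y'|\bigr)^2$. Writing $s_+=|x_1+y_1|$, $s_-=|x_1-y_1|$, $r=|x'-y'|$, I would view
\[
\Phi(r):=2a\bigl(I_\text{int}(x,y)-I_\text{st}(x,y)\bigr)=\ba\, r^2-2\sqrt\ba\, s_+\, r+(a s_-^2-\ba s_+^2)
\]
as a convex quadratic in $r$. A direct discriminant computation, using the key identity $s_+^2-s_-^2=4 x_1 y_1$, yields the two roots
\[
r_{\pm}=\tfrac{1}{\sqrt{\ba}}\bigl(s_+\pm 2\sqrt{a\,x_1 y_1}\bigr).
\]
The larger root $r_+$ is precisely the boundary of the cone $\cone(x)$ as defined in \eqref{eq:def_cone}, while $r_-<r_+$. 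I would then check that the relevant sub-case constraint $r>s_+/\sqrt{\ba}$ forces $r>r_-$ automatically (since $2\sqrt{a\,x_1 y_1}\geq 0$), so the sign of $\Phi(r)$ is governed by $r$ versus $r_+$: one has $I_\text{int}<I_\text{st}$ when $r<r_+$ (i.e.\ $(x,y)\in\cone$) and $I_\text{st}<I_\text{int}$ when $r>r_+$ (i.e.\ $(x,y)\notin\cone$). This produces exactly the two-branch formula in \eqref{eq:c_mu>0}.

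The only mildly delicate point is the algebra of the discriminant and the observation that $r_+$ matches the cone boundary; once those are in hand, the case analysis is linear. I would also briefly note the consistency of the two expressions on the interface $r=r_+$, where both equal $\tfrac12(s_-^2+r_+^2)$, so $c$ is continuous across the cone boundary and the symmetry $c(x,y)=c(y,x)$ is manifest from the formula. This completes the identification of $c=\min\{I_\text{int},I_\text{st}\}$ as claimed.
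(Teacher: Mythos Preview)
Your proposal is correct and follows essentially the same approach as the paper: the same three-way case split according to Lemma~\ref{lem:I2_epxlicit}, the same elementary inequality $|x_1-y_1|\leq|x_1+y_1|$ for the first two cases, and the same quadratic-in-$|x'-y'|$ analysis with the same roots $r_\pm$ in the third. Your notation $s_\pm,r$ and the explicit identity $s_+^2-s_-^2=4x_1y_1$ streamline the algebra slightly, and your remark on continuity across $\partial\cone$ is a small addition, but the argument is otherwise identical.
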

\begin{proof}
Recall that by definition $I_\text{int}(x,y)=\frac 12 |x-y|^2$, so we need to compare this Euclidean cost to each of the three explicit cases in Lemma~\ref{lem:I2_epxlicit} to determine which of $I_\text{int},I_\text{st}$ realizes the minimum.
\begin{itemize}
 \item
If $a\leq 1$ we have by \eqref{eq:I_st_explicit} that $I_\text{int}(x,y)=\frac{1}{2}(|x_1-y_1|^2+|x'-y'|^2)\leq \frac{1}{2}(|x_1+y_1|^2+|x'-y'|^2)=I_\text{st}(x,y)$ due to $x_1,y_1\geq 0\implies|x_1-y_1|\leq |x_1+y_1|$.
\item
For the very same reason the second alternative in \eqref{eq:I_st_explicit} also leads to $I_\text{int}\leq I_\text{st}$.
\item
In the third case, straightforward algebra shows that
$$
I_\text{st}(x,y)\leq I_\text{int}(x,y)
\iff
\frac{1}{a}\left(\sqrt\ba |x_1+y_1|+|x'-y'|\right)^2  \leq |x_1-y_1|^2+|x'-y'|^2.
$$
Solving explicitly in $|x'-y'|$ shows that this quadratic inequality holds true outside of the two roots
$$
|y'-x'|\not\in [R^-,R^+],
\hspace{2cm}R^\pm=\frac{1}{\sqrt \ba}|x_1+y_1|\pm 2\sqrt{\frac{a}{\ba}}\sqrt{x_1y_1}.
$$
In particular our standing assumption that $|y'-x'|>\frac 1{\sqrt{\ba}}|x_1+y_1|$ from \eqref{eq:I_st_explicit} rules out the lower part $|y'-x'|\leq R^-$, hence in that case $I_\text{st}\leq I_\text{int}$ if and only if $|y'-x'|\geq R^+$.
According to \eqref{eq:def_cone} is exactly the definition of $(x,y)\not\in\cone$, or equivalently $y\not\in\cone(x)$, and the proof is complete.
\end{itemize}
Those three cases are again depicted in Figure~\ref{fig:cones_I2}.
Note that if $x_1>0$ the cone $\cone(x)$ is a true paraboloid as in Figure~\ref{fig:cones_I2_x1>0}, while if $x_1=0$ it collapses into a linear cone as in Figure~\ref{fig:cones_I2_x1=0}.
\end{proof}

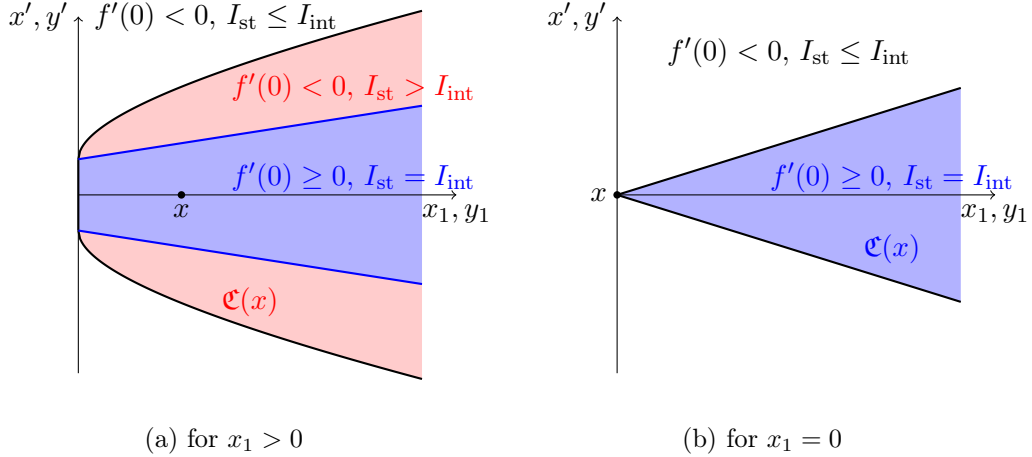
\begin{figure}[h!]

\begin{center}
\begin{subfigure}{0.4\textwidth}
\begin{tikzpicture}[]
\begin{axis}[ticks=none,axis lines=none,
xmin=-3, xmax=12,
ymin=-6, ymax=6
]
\draw [->] (axis cs:0,0) -- (axis cs:11,0);
\draw [->] (axis cs:0,-5) -- (axis cs:0,5);

\addplot [domain=0:10, samples=200, name path=f, thick]
{1+.1*x+sqrt(x)};
\addplot [domain=0:10, samples=200, name path=g, thick]
{-1-.1*x-sqrt(x)};
\draw [thick] (axis cs:0,-1) -- (axis cs:0,1);

\addplot[red!20, opacity=0.4] fill between [of= f and g];
\node [red] at (axis cs:8,3) {$f'(0)<0,\, I_\text{st}> I_\text{int}$};
\node [   ] at (axis cs:4,5) {$f'(0)<0,\,I_\text{st}\leq  I_\text{int}$};

\addplot [blue,domain=0:10, samples=200, name path=ff, thick]
{1+.15*x};
\addplot [blue,domain=0:10, samples=200, name path=gg, thick]
{-1-.15*x};
\addplot[blue!30, opacity=0.4] fill between [of= ff and gg];
\node [below] at (axis cs:3,0) {$x$};
\node at (axis cs:3,0)[circle,fill,inner sep=1pt]{};
\node [blue] at (axis cs:8,.5) {$f'(0)\geq 0,\, I_\text{st}=I_\text{int}$};

\node [below] at (axis cs:11,0) {$x_1,y_1$};
\node [left] at (axis cs:0,5) {$x',y'$};

\node [red] at (axis cs:5,-3) {$\cone(x)$};

\end{axis}



  \end{tikzpicture}
  \caption{for $x_1>0$}
  \label{fig:cones_I2_x1>0}
  \end{subfigure}
\hspace{.3cm}
  \begin{subfigure}{0.4\textwidth}
\begin{tikzpicture}[]
\begin{axis}[ticks=none,axis lines=none,
xmin=-3, xmax=12,
ymin=-6, ymax=6
]
\draw [->] (axis cs:0,0) -- (axis cs:11,0);
\draw [->] (axis cs:0,-5) -- (axis cs:0,5);
\node [   ] at (axis cs:5,4) {$f'(0)<0,\,I_\text{st}\leq  I_\text{int}$};

\addplot [domain=0:10, samples=200, name path=ff, thick]
{.3*x};
\addplot [domain=0:10, samples=200, name path=gg, thick]
{-.3*x};
\addplot[blue!30, opacity=0.4] fill between [of= ff and gg];
\node [left] at (axis cs:0,0) {$x$};
\node at (axis cs:0,0)[circle,fill,inner sep=1pt]{};
\node [blue] at (axis cs:8,.5) {$f'(0)\geq 0,\, I_\text{st}=I_\text{int}$};

\node [below] at (axis cs:11,0) {$x_1,y_1$};
\node [left] at (axis cs:0,5) {$x',y'$};

\node [blue] at (axis cs:8,-1.5) {$\cone(x)$};

\end{axis}
  \end{tikzpicture}
  \caption{for $x_1=0$}
  \label{fig:cones_I2_x1=0}
  \end{subfigure}
  \caption{The alternatives in Lemma~\ref{lem:I2_epxlicit} and Lemma~\ref{lem:min_I1_I2} (for $a>1$).}
  \label{fig:cones_I2}
\end{center}
\end{figure}
\noindent
We can now proceed with our main objective in this section
\begin{proof}[Proof of Theorem~\ref{theo:LDP_static}]
Note first that the cone $\cone$ is completely symmetric in $x\leftrightarrow y$, as is obvious from \eqref{eq:def_cone}.
Since the two expressions $|x-y|^2$ and $\frac{1}{a}\left(\sqrt\ba |x_1+y_1| + |y'-x'|\right)^2$ in \eqref{eq:c_mu<0}\eqref{eq:c_mu>0} are also symmetric, clearly $c(x,y)=c(y,x)$.
The continuity of $ c$ is immediate from its explicit expression, and it is also easy to check that it has compact sublevelsets and is thus a good rate function as claimed.

Let us now address the main difficulty, namely the LDP itself.
To this end, pick a Borel set $E\subset \bar D$ and assume $\mu(E)>0$ (otherwise $\rho^\eps_x\ll\mu$ implies $\rho^\eps_x(E)=0$ for all $\eps>0$ and there is nothing to prove).
\\

\medskip
\noindent
\textbf{\underline{Step 1: LDP upper bound.}}
Since $y\mapsto c(x,y)$ is coercive
$$
{\hat\lambda}\coloneqq \inf\limits_{y\in \bar E}c(x,y)
=\min\limits_{y\in \bar E}c(x,y)
\hspace{1cm}\in [0,\infty)
$$
if finite.
From \eqref{eq:def_Reps_2d} it suffices to estimate from above $\rho^\eps_\text{int},\rho^\eps_\text{st}\lesssim e^{-\frac {\hat\lambda}\eps}$ separately.

We start with the ``interior'' contribution.
Recall that $\mu(\rd y)=(\rd y_1+\frac 1{2\t}\delta_0(\rd y_1))\rd y'$, so $E$ is possibly concentrated on the boundary $\pD=\{y_1=0\}$ even if $\mu(E)>0$.
By definition of $c=\min\{I_\text{int},I_\text{st}\}$ and $\hat\lambda$ we have that $I_\text{int}(x,y)\geq c(x,y)\geq {\hat\lambda}$ for $\mu$-a.e. $y\in E$ (and even if $E$ concentrates on $\{y_1=0\}$).
Recalling also that $g^0_\eps(x_1,0)=0$ we then estimate from \eqref{eq:def_rho1}
\begin{multline*}
\int_{E}\rho^\eps_\text{int}(x,y)\mu(\rd y)
=
\int_{E}\rho^\eps_\text{int}(x,y)\rd y
\\
\leq
\frac{\theta}{(2\pi\eps)^{\frac d2} }\int_{E} \left[\exp\left(-\frac{|x_1-y_1|^2}{2\eps}\right)-0\right]
\exp\left(-\frac{|x'-y'|^2}{2\eps}\right)\rd y
\\
=
\frac{C}{\eps^{\frac d2}}\int_{ E} \exp\left(-\frac{I_\text{int}(x,y)}{\eps}\right) \rd y
=
\frac{C}{\eps^{\frac d2}}e^{-\frac{\hat\lambda}\eps}\int_{ E} \exp\left(-\frac{I_\text{int}(x,y)-{\hat\lambda}}{\eps}\right) \rd y.
\end{multline*}
By definition of ${\hat\lambda}$ we can simply write $0\leq I_\text{int}(x,y)-{\hat\lambda}\leq \frac{I_\text{int}(x,y)-{\hat\lambda}}{\eps}$ in the exponential, whence
\begin{multline}
\label{eq:upper_estimate_rho_int}
\int_{E}\rho^\eps_\text{int}(x,y)\mu(\rd y)
\leq
\frac{C}{\eps^{\frac d2}}e^{-\frac{\hat\lambda}\eps}\int_{ E} \exp\left(-[I_\text{int}(x,y)-{\hat\lambda}]\right) \rd y
\\
=
\frac{C}{\eps^{\frac d2}}e^{-\frac{\hat\lambda}\eps}\int_{ E} \exp\left(-\frac{|x-y|^2}{2}+{\hat\lambda}\right) \rd y
\\
\leq
\frac{C}{\eps^{\frac d2}}e^{-\frac{\hat\lambda}\eps}\int_{ \R^d_+} \exp\left(-\frac{|x-y|^2}{2}+{\hat\lambda}\right) \rd y
=
\frac{C}{\eps^{\frac d2}}e^{-\frac{\hat\lambda}\eps},
\end{multline}
where $C\geq 0$ varies from line to line and depends on $x,\theta,{\hat\lambda}, E$ but not on $\eps$.
Note that at this stage we have possibly $C=0$ if $E$ actually concentrates on the boundary.
This settles the upper estimate for the ``interior'' part in \eqref{eq:def_Reps_2d}.

For the second ``sticky'' contribution, recall that $\bar I_\text{st}(x,y,L)=\frac{|x_1+y_1|^2}{2(1-L)}+\frac{|x'-y'|^2}{2(1+\ba L)}$ and by definition $I_\text{st}(x,y)=\min\limits_{L\in[0,1]}\bar I_\text{st}(x,y,L)$.
Since ${\hat\lambda} \leq c(x,y)\leq I_\text{st}(x,y)$ for $\mu$-a.e. $y\in E$, we simply have $\bar I_\text{st}(x,y,L)\geq {\hat\lambda}$ for $\mu(\rd y)\rd L$-a.e. $(y,L)\in F\coloneqq E\times[0,1]$.
From \eqref{eq:def_rho2} we get
\begin{multline*}
\int_E\rho^\eps_\text{st}(x,y)\mu(\rd y)=
\int_{ E}\left(\int_0^1\bar \rho^\eps_\text{st}(x,y,L)\rd L\right)\mu(\rd y)
\\
=\frac{\theta}{(2\pi\eps)^{\frac d2}}
\int_{ F}
\frac{\eps\theta L +x_1+y_1}{(1-L)^{3/2}(1+\ba L)^{1/2}}
\exp\left(-\frac{|\eps\theta L +x_1+y_1|^2}{2\eps(1-L)}\right)\exp\left(-\frac{|x'-y'|^2}{2\eps(1+\ba L)}\right)\mu(\rd y)\rd L
\\
=\frac{C}{\eps^{\frac d2}}
\int_F
\frac{\eps\theta L +x_1+y_1}{(1-L)^{3/2}(1+\ba L)^{1/2}}
\exp\left(-\frac{|\eps\theta L +x_1+y_1|^2-|x_1+y_1|^2}{2\eps(1-L)}\right)
\\
\times\exp\Bigg(-\frac 1\eps\Bigg[\underbrace{\frac{|x_1+y_1|^2}{2(1-L)}+\frac{|x'-y'|^2}{2(1+\ba L)}}_{=\bar I_\text{st}(x,y,L)}\Bigg]\Bigg)\mu(\rd y)\rd L
\\
=\frac{C}{\eps^{\frac d2}}e^{-\frac {\hat\lambda}\eps}
\int_F
\frac{\eps\theta L +x_1+y_1}{(1-L)^{3/2}(1+\ba L)^{1/2}}
\exp\left(-\theta L\frac{\eps\theta L +2x_1+2y_1}{2(1-L)}\right)
\\
\times\exp\left(-\frac{\bar I_\text{st}(x,y,L)-{\hat\lambda}}{\eps}\right)\mu(\rd y)\rd L,
\end{multline*}
where $C$ is again a constant possibly varying from line to line but independent of $\eps>0$.
Due to $\bar I_\text{st}\geq I_\text{st} \geq c\geq {\hat\lambda}$ we can simply write $\frac{\bar I_\text{st}(x,y,L)-{\hat\lambda}}{\eps}\geq \bar I_\text{st}(x,y,L)-{\hat\lambda}$ in the last exponential term in the integral.
Exploiting moreover the very rough bounds $1+\ba L\geq 1-L$, $\eps\theta L\leq \eps\theta\leq 1$ for small $\eps$ as well as $\eps\t L+x_1+y_1\geq \eps\t L$ (recall that $x_1,y_1\geq 0$ in $\bD$) we obtain
\begin{multline*}
\int_E\rho^\eps_\text{st}(x,y)\mu(\rd y)
\leq
\frac{C}{ \eps^{\frac d2}}e^{-\frac {\hat\lambda}\eps}
\int_F
\frac{1 +x_1+y_1}{(1-L)^{2}}
\exp\left(-\eps \frac{\theta^2 L^2}{2(1-L)}\right)
\exp\left(-\bar I_\text{st}(x,y,L)+{\hat\lambda}\right)\mu(\rd y)\rd L
\\
=
\frac{Ce^{\hat\lambda}}{ \eps^{\frac d2}}e^{-\frac {\hat\lambda}\eps}
\int_F
\frac{1 +x_1+y_1}{(1-L)^{2}}
\exp\left(-\eps \frac{\theta^2 L^2}{2(1-L)}\right)\\
\times
\exp\Bigg(-\Bigg[\underbrace{\frac{|x_1+y_1|^2}{2(1-L)}+\frac{|x'-y'|^2}{2(1+\ba L)}}_{=\bar I_\text{st}(x,y,L)}\Bigg]\Bigg)\mu(\rd y)\rd L.
\end{multline*}
Since $\frac{1}{1+\ba L}\geq c=\frac 1{1+|A|}>0 $ as soon as $a>0\iff \ba>-1$ we get
\begin{multline*}
\int_{E}\rho^\eps_\text{st}(x,y)\mu(\rd y)
\leq
\frac{C}{\eps^{\frac d2}}e^{-\frac {\hat\lambda}\eps}
\int_F
\frac{1}{(1-L)^{2}}
\exp\left(-\eps \frac{\theta L^2}{2(1-L)}\right)(1+x_1+y_1)
\\
\hspace{1cm}\times
\exp\left(-\frac{|x_1+y_1|^2}{2}-c|x'-y'|^2\right)\mu(\rd y)\rd L
\\
\leq
\frac{C}{\eps^{\frac d2}}e^{-\frac {\hat\lambda}\eps}
\left(\int_0^1
\frac{1}{(1-L)^{2}}
\exp\left(-\eps \frac{\theta L^2}{2(1-L)}\right)\rd L\right)
\\
\times
\left(\int_\bD (1+x_1+y_1)
\exp\left(-\frac{|x_1+y_1|^2}{2}-c|x'-y'|^2\right)\mu(\rd y)\right).
\end{multline*}
The second integral on the whole spatial domain $\bD$ is clearly absolutely convergent due to the exponential decay in $y_1,y'$.
As for the first $\rd L$ integral we estimate
\begin{align*}
L\leq \frac 12& :\hspace{1cm}
\frac{1}{(1-L)^{2}}
\exp\left(-\eps \frac{\theta L^2}{2(1-L)}\right)
\leq
\frac{1}{(1-L)^{2}}
\leq
\frac 14,
\\
L\geq \frac 12 & :\hspace{1cm}
\frac{1}{(1-L)^{2}}
\exp\left(-\eps \frac{\theta L^2}{2(1-L)}\right)
\leq
\frac{1}{\eps^2}\left(\frac{\eps}{1-L}\right)^2
\exp\left(-c\frac{\eps}{1-L}\right),
\end{align*}
where $c=\t/8>0$.
Being $u\mapsto u^2\exp(-c u)$ globally bounded for $u\geq 0$ we conclude that
$$
\int_0^1
\frac{1}{(1-L)^{2}}
\exp\left(-\eps \frac{\theta L^2}{2(1-L)}\right)\rd L \leq C\left (1+\frac 1{\eps ^2}\right)
$$
and therefore
\begin{equation}
 \label{eq:upper_estimate_rho_st}
\int_E\rho^\eps_\text{st}(x,y)\mu(\rd y)
\leq
\frac{C}{\eps^{\frac d2}}\left(1+\frac{1}{\eps^2}\right)e^{-\frac{{\hat\lambda}}{\eps}}
\end{equation}
for some $C\geq 0$ independent of $\eps$.

Gathering \eqref{eq:upper_estimate_rho_int}\eqref{eq:upper_estimate_rho_st} gives
$$
 \rho^\eps_x(E)
 =
 \int_{E}\rho^\eps_\text{int}(x,y)\rd y
 +
 \int_E\rho^\eps_\text{st}(x,y)\mu(\rd y)
\leq
\frac{C}{\eps^{\frac d2}}\left (1+\frac{1}{\eps^2}\right)e^{-\frac{\hat\lambda}\eps}
$$
uniformly in $\eps\to 0$, which in turn yields the LDP upper bound
$$
\limsup\limits_{\eps\to 0}
\,\eps \log \rho^\eps_x(E) \leq -{\hat\lambda}=-\inf\limits_{y\in \bar E}c(x,y).
$$
\textbf{\underline{Step 2: LDP lower bound.}}
Let
$$
{\check\lambda}
\coloneqq
\inf\limits_{y\in E^\circ}c(x,y).
$$
We want to prove that $\rho_x(E)\gtrsim e^{-\frac{\check\lambda}{\eps}}$.
If $E^\circ=\emptyset$ then $\check\lambda=+\infty$ and the statement is vacuous, so we may equally assume that $E^\circ$ is nonempty.
Because $c(x,\cdot)$ is continuous, coercive, and bounded from below, there exists $y^*\in \overline{E^\circ}$ such that
$$
{\check\lambda}
=
c(x,y^*)
=
\min\Big\{I_\text{int}(x,y^*),I_\text{st}(x,y^*)\Big\}.
$$
As usual we distinguish cases depending on which of the two interior/sticky contributions $I_\text{int},I_\text{st}$ realizes the ``least unlikely of the unlikely''.
\begin{enumerate}
\item
Assume first that ${\check\lambda}=I_\text{st}(x,y^*)\leq I_\text{int}(x,y^*)$.
By definition of $I_\text{st}(x,y)=\min\limits_{L\in[0,1]}\bar I_\text{st}(x,y,L)$ there exists $L^*\in[0,1]$ such that ${\check\lambda}=\bar I_\text{st}(x,y^*,L^*)$.
Fix an arbitrarily small $\eta>0$.
By continuity of $\bar I_\text{st}$ in \eqref{eq:def_I2bar} there exists a small neighborhood $F_\eta\subset F\coloneqq E\times [0,1]$ of $(y^*,L^*)$ with $\mu\otimes\rd L(F_\eta)<+\infty$ such that
$$
\bar I_\text{st}(x,y,L)\leq {\check\lambda}+\eta
\qtext{for}
\mu(\rd y)\rd L\text{-a.e. }(y,L)\in F_\eta.
$$
By \eqref{eq:def_rho2_rhobar2}\eqref{eq:def_rho2} we see that
\begin{align*}
\rho^\eps_x(E)
& =
\int_{E}\rho^\eps_\text{int}(x,y)\mu(\rd y) + \int_{E}\int_0^1\bar \rho^\eps_\text{st}(x,y,L)\rd L\mu(\rd y)
\\
& \geq
\int_{F_\eta}\bar \rho^\eps_\text{st}(x,y,L)\rd L\mu(\rd y)
\\
& =\frac{\theta}{(2\pi\eps)^{\frac d2} }\int_{F_\eta}
\frac{\eps\theta L +x_1+y_1}{(1-L)^{3/2}(1+\ba L)^{1/2}}
\\
& \hspace{1cm}
\times
\exp\left(-\frac{|\eps\theta L +x_1+y_1|^2}{2\eps(1-L)}\right)\exp\left(-\frac{|x'-y'|^2}{2\eps(1+\ba L)}\right)\mu(\rd y)\rd L,
\end{align*}
and isolating the leading term $|\eps\theta L +x_1+y_1|^2=
|x_1+y_1|^2 +\eps\theta L \left(\eps\theta L +2x_1+2y_1\right)$ in the first exponential leads next to
\begin{multline*}
\rho^\eps_x(E)
\geq
\frac{c}{\eps^{\frac d2}}\int_{F_\eta}
\frac{\eps\theta L +x_1+y_1}{(1-L)^{3/2}(1+\ba L)^{1/2}}
\exp\left(-\theta L\frac{\eps\theta L +2x_1+2y_1}{2(1-L)}\right)
\\
\times
\exp\Bigg(-\frac 1\eps\Bigg[\underbrace{\frac{|x_1+y_1|^2}{2(1-L)}+\frac{|x'-y'|^2}{2(1+\ba L)}}_{=\bar I_\text{st}(x,y,L)\leq {\check\lambda}+\eta}\Bigg]\Bigg)\mu(\rd y)\rd L
\\
\geq \frac{c}{\eps^{\frac d2}}e^{-\frac{{\check\lambda}+\eta}\eps}\int_{F_\eta}
\frac{\eps\theta L +x_1+y_1}{(1-L)^{3/2}(1+\ba L)^{1/2}}\exp\left(-\theta L\frac{\eps\theta L +2x_1+2y_1}{2(1-L)}\right)\mu(\rd y)\rd L
\end{multline*}
for some $c>0$ independent of $\eps$ (which will keep varying again from line to line below).
Recalling that $x_1,y_1\geq 0$ and decreasing $F_\eta$ if needed we can assume that $0\leq x_1+y_1\leq 2(x_1+y_1^*)$ in $F_\eta$, and with $\frac{1}{1+AL}\geq \frac 1{1+|A|}$, $\frac 1{1-L}\geq 1$ and $\t L\leq \t,\eps\t L\leq 1$ the previous integral can be bounded from below as
\begin{multline}
 \label{eq:LDP_lower_bound_I2min}
\rho^\eps_x(E)
\geq \frac{c}{\eps^{\frac d2}}e^{-\frac{{\check\lambda}+\eta}\eps}\int_{F_\eta}
\frac{\eps\theta L}{(1+|\ba| )^{1/2}}
\exp\left(-\theta \frac{1+ 4 x_1+4y_1^*}{2(1-L)}\right)\mu(\rd y)\rd L
\\
= \frac{c}{\eps^{\frac d2 -1}} e^{-\frac{{\check\lambda}+\eta}\eps} \int_{F_\eta}L\exp\left(-\frac C{1-L}\right)\mu(\rd y)\rd L
= \frac{c}{\eps^{\frac d2 -1}} e^{-\frac{{\check\lambda}+\eta}\eps}.
\end{multline}
In the last equality we used that $(y,L)\mapsto Le^{-C/(1-L)}>0$ is locally $\mu(\rd y)\rd L$-integrable together with the fact that $F_\eta$ has positive measure, and as usual $c,C>0$ may depend on the various parameters (including $\eta>0$) but not on $\eps$.
 \item
Consider now the opposite case $\check\lambda=I_\text{int}(x,y^*)<I_\text{st}(x,y^*)$.
If $a\leq 1$ there holds $|x_1+y_1|^2\geq |x_1-y_1|^2$ in \eqref{eq:I_st_explicit} and therefore $I_\text{st}\geq I_\text{int}$, so we may as well assume that $a> 1$.
But, for $a>1$, from the proof of Lemma~\ref{lem:min_I1_I2} we have $I_\text{int}=I_\text{st}$ if either $x_1=0$ or $y^*_1=0$, see Figure~\ref{fig:cones_I2}.
Hence we only consider $a>1$ and $x_1,y_1^*>0$.

This being said, fix again a small $\eta>0$ and choose a small neighborhood $E_\eta\subset E$ of $y^*$ with positive Lebesgue measure (recall that $E^\circ\neq\emptyset$) and such that $I_\text{int}(x,y)\leq {\check\lambda} +\eta$, $\mu$-a.e. on $E_\eta$.
Up to decreasing $E_\eta$ if needed we can further assume that $x_1y_1\geq \frac 12x_1y_1^*>0$ in this neighborhood.
By \eqref{eq:def_rho1} and $\mu(\rd y)\geq \rd y$, elementary algebra in the exponentials gives
\begin{multline}
\label{eq:LDP_lower_bound_I1min}
\rho^\eps_x(E)
=\int_{E}\rho^\eps_\text{int}(x,y)\mu(\rd y) + \int_E \rho^\eps_\text{st}(x,y)\mu(\rd y)
\geq
\int_{E_\eta}\rho^\eps_\text{int}(x,y)\rd y
\\
=\int_{E_\eta}\frac{1}{(2\pi \eps)^{\frac d2}}
\left[\exp\left(-\frac{|x_1-y_1|^2}{2\eps}\right)-\exp\left(-\frac{|x_1+y_1|^2}{2\eps}\right)\right]\exp\left(-\frac{|x'-y'|^2}{2\eps}\right)
\rd y
\\
=
\frac{c}{\eps^{\frac d2}}\int_{E_\eta}
\Bigg[1-\exp\Big(-\underbrace{\frac{2x_1y_1}{\eps}}_{\geq\frac{x_1y_1^*}{\eps}}\Big)\Bigg]
\exp\Bigg(-\frac 1\eps\Bigg[\underbrace{\frac{|x_1-y_1|^2}{2}+\frac{|x'-y'|^2}{2}}_{I_\text{int}(x,y)\leq {\check\lambda}+\eta}\Bigg]\Bigg)
\rd y
\\
\geq
\frac{c}{\eps^{\frac d2}}e^{-\frac{{\check\lambda}+\eta}{\eps}}\int_{E_\eta}\left[1-\exp\left(-\frac{x_1y^*_1}{\eps}\right)\right] \rd y
\\
\geq \frac{c}{\eps^\frac d2}e^{-\frac{{\check\lambda}+\eta}{\eps}}\int_{E_\eta}\frac 12\rd y
= \frac{c}{\eps}e^{-\frac{{\check\lambda}+\eta}{\eps}},
\end{multline}
where $c>0$ is again uniform in $\eps$.

\end{enumerate}
Gathering \eqref{eq:LDP_lower_bound_I2min}\eqref{eq:LDP_lower_bound_I1min} and taking $\eps\to 0$ we get in any case
$$
\liminf\limits_{\eps\to 0}\eps\log  \rho^\eps_x(E)\geq -({\check\lambda}+\eta).
$$
Since $\eta>0$ was arbitrary this gives the desired LDP lower bound the proof is complete.
\end{proof}

\section{Study of the cost function}
\label{sec:cost}
In order to support our informal discussion from Section~\ref{sec:heuristics} we need now to rigorously identify the static cost from Theorem~\ref{theo:LDP_static} as a dynamical minimization problem $c(x,y)=\min\int_0^1L(\o_t,\dot\o_t)\rd t$, where the Lagrangian $L(x,q)$ is obtained by Legendre-transforming the Hamiltonian
$$
H\phi(x)=H_a(x,\nabla\phi(x))=\lim\limits_{\eps\to 0}\eps e^{-\frac\phi\eps}\Q^\eps e^{\frac\phi\eps}.
$$
Again, we only carry out this program in the easy planar setup for simplicity (our explicit computation of geodesics below becomes impossible in general manifolds).
Claiming now full mathematical rigour, for fixed $a>0$ and all $p\in T_x^*\bar D,q\in T_x\bar D$ we set according to \eqref{eq:Heps}\eqref{eq:H}\eqref{eq:L}
$$
H(x,p)\coloneqq
\frac 12
\begin{cases}
|p|^2 & \text{if }x\in D\\
a|p|^2& \text{if }x\in \pD
\end{cases}
$$
and
\begin{equation}
\label{eq:def_La}
L(x,q)\coloneqq H^*(x,q)=
\frac 12
\begin{cases}
|q|^2 & \text{if }x\in D\\
\frac 1a |q|^2& \text{if }x\in \pD
\end{cases}.
\end{equation}
Implicitly we mean here that $q\in T_x\pD\cong \R^{d-1}$ should be a vertical tangent vectors if $x\in\pD$ while $q\in T_x D\cong\R^d$ at interior points $x\in D$.
According to our discussion in Section~\ref{sec:heuristics} $L$ is only lower semi-continuous for $a>1$, and we expect that the lower semi-continuous relaxation is the relevant object.
Here it is a simple exercise to compute
\begin{equation}
\label{eq:def_Lbar}
 \uL(x,q)
\coloneqq\underset{\substack{x_n\to x\\q_n\to q}}{\inf}\underset{n\to\infty}{\liminf} L(x_n,q_n)
=
\begin{cases}
L(x,q) & \text{if }a>1\\
\frac 12 |q|^2 & \text{if }a\leq 1\\
\end{cases}.
\end{equation}
The associated path-action is
\begin{equation}
\label{eq:def_C_dyn}
\underline C(\o)
\coloneqq
\begin{cases}
\int_0^1\uL(\o_t,\dot\o_t)\rd t & \text{if }\o\in H^1,\\
+\infty & \text{else}.
\end{cases}
\end{equation}
Recalling that the contact angle $\alpha$ is defined as
$$
\sin^2\alpha=\frac 1a
\hspace{2cm}\text{for }a\geq 1,
$$
our main result in this section is
\begin{theo}
\label{theo:c_static_dynamic-geodesics}
The dynamical cost $\uC$ is $H^1$-weakly lower semi-continuous, coercive, and induces the static cost $c$ from Theorem~\ref{theo:LDP_static} in the sense that
\begin{equation}
\label{eq:c=min_L}
  \boxed{c(x,y)
  = \min\left\{\int_0^1 \uL(\o_t,\dot \o_t)\rd t:\quad \o\in H^1(0,1;\bar D)\mbox{ and }\o_0=x,\o_1=y\right\}.}
\end{equation}
The minimum is always attained for a unique geodesic $(\gamma^{xy}_t)_{t\in[0,1]}\in H^1(0,1;\bar D)$.
Moreover
\begin{itemize}
 \item
For $a\leq 1$ geodesics are the standard Euclidean interpolants $\gamma^{xy}_t=(1-t)x+ty$.
\item
For $a>1$ geodesics are characterized as in Figure~\ref{fig:cost_geodesics_x1>0} and Figure~\ref{fig:cost_geodesics_x1=0}:
for $y\not\in\cone(x)$ they are concatenations of at most three straight lines, either making an angle $\alpha$ with the normal direction or vertical along the boundary, while for $y\in \cone(x)$ geodesics are again Euclidean interpolants.
\end{itemize}
\end{theo}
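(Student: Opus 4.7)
The plan is to reduce the variational problem to a finite-dimensional convex optimization by slicing $[0,1]$ into interior/boundary epochs, and to match the explicit cost $c(x,y)$ from Theorem~\ref{theo:LDP_static} by a pair of sharp Cauchy--Schwarz bounds. Throughout, write $\ba=a-1$ and, when $a>1$, $\tan\alpha=1/\sqrt{\ba}$.

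\smallskip

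\textbf{Step 1: Compactness and existence.} Joint lsc of $\uL$ is immediate from \eqref{eq:def_Lbar}: for $a\leq 1$ the integrand $\tfrac12|q|^2$ is continuous, whereas for $a>1$ the inequality $\tfrac{1}{2a}|q|^2\leq \tfrac12|q|^2$ makes $L$ lsc at boundary points along any approaching sequence. Standard convex-integrand theory then yields weak $H^1$-lsc of $\uC$. Since $\uL(x,q)\geq \tfrac{1}{2\max(a,1)}|q|^2$, $\uC$ controls $\|\dot\omega\|_{L^2}$ and is thus $H^1$-coercive, and the direct method produces a minimizer $\gamma^{xy}$ for the right-hand side of \eqref{eq:c=min_L}.

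\smallskip

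\textbf{Step 2: Upper bound $\min\int\uL\leq c(x,y)$.} In the easy cases (either $a\leq 1$, or $a>1$ with $y\in\cone(x)$) the Euclidean interpolant $\gamma_t=(1-t)x+ty$ stays in $\bD$, touches $\pD$ only at endpoints, and has action $\tfrac12|y-x|^2=c(x,y)$. For $a>1$ with $y\notin\cone(x)$, I would construct the three-segment polyline sketched in Figures~\ref{fig:cost_geodesics_x1>0}--\ref{fig:cost_geodesics_x1=0}: descend from $x$ to $(0,p)$ with $p=x'+\tfrac{x_1}{\sqrt{\ba}}(y'-x')/|y'-x'|$ along a straight line making angle $\alpha$ with the normal; travel along $\pD$ in the $(y'-x')$-direction to $(0,q)$ with $q=y'-\tfrac{y_1}{\sqrt{\ba}}(y'-x')/|y'-x'|$; ascend from $(0,q)$ to $y$ at angle $\alpha$. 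Running each segment at constant speed on time slices $\tau_i$ with $\sum\tau_i=1$ turns the action into $\tfrac12\sum_i\beta_i^2/\tau_i$ for explicit $\beta_i$ (determined by the segment length and the local Lagrangian coefficient $1$ or $1/a$), namely $\beta_1=x_1\sqrt{a/\ba}$, $\beta_3=y_1\sqrt{a/\ba}$, and $\beta_2=\ell_2/\sqrt{a}$ with $\ell_2=|y'-x'|-(x_1+y_1)/\sqrt{\ba}$. The identity $\sum\beta_i^2/\tau_i\geq(\sum\beta_i)^2/\sum\tau_i$, applied at the optimal $\tau_i^*$, produces after routine algebra exactly $\tfrac{1}{2a}(\sqrt{\ba}(x_1+y_1)+|y'-x'|)^2=I_{\mathrm{st}}(x,y)=c(x,y)$.

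\smallskip

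\textbf{Step 3: Lower bound $\int\uL\geq c$.} For any admissible $\omega\in H^1$, split $[0,1]=T_{\mathrm{int}}\sqcup T_{\mathrm{bd}}$ according to $\omega^1_t>0$ or $\omega^1_t=0$, with Lebesgue measures $\tau_1,\tau_2$. Since $\omega^1\in H^1(0,1)$ vanishes on $T_{\mathrm{bd}}$, a Stampacchia argument gives $\dot\omega^1=0$ a.e.\ there, so that
\[\int_0^1\uL(\omega_t,\dot\omega_t)\,dt=\tfrac12\!\!\int_{T_{\mathrm{int}}}\!\!|\dot\omega|^2\,dt+\tfrac{1}{2\max(a,1)}\!\!\int_{T_{\mathrm{bd}}}\!\!|\dot\omega'|^2\,dt.\]
Setting $V=\int_{T_{\mathrm{int}}}|\dot\omega^1|\,dt$ and $S_i=\int_{T_i}|\dot\omega'|\,dt$, Cauchy--Schwarz on each slice gives
\[\int_0^1\uL\,dt\;\geq\;\frac{V^2+S_{\mathrm{int}}^2}{2\tau_1}+\frac{S_{\mathrm{bd}}^2}{2\max(a,1)\,\tau_2},\]
subject to $V\geq x_1+y_1$ whenever $\tau_2>0$ (the curve must descend to $0$ and re-ascend) and $S_{\mathrm{int}}+S_{\mathrm{bd}}\geq|y'-x'|$. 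Optimizing first in $(S_{\mathrm{int}},S_{\mathrm{bd}})$ by a further Cauchy--Schwarz and then in $(\tau_1,\tau_2)$ by one-variable calculus yields $\geq I_{\mathrm{st}}(x,y)$ in the regime $\tau_2>0$, while $\tau_2=0$ (purely interior paths, including those grazing $\pD$ on a measure-zero set) yields $\geq\tfrac12|y-x|^2=I_{\mathrm{int}}(x,y)$ by Jensen. Combining these alternatives with Lemma~\ref{lem:min_I1_I2} gives $\int_0^1\uL\geq\min\{I_{\mathrm{int}},I_{\mathrm{st}}\}=c(x,y)$.

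\smallskip

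\textbf{Step 4: Uniqueness, geometry, and main obstacle.} Equality in the Cauchy--Schwarz chain of Step 3 forces $|\dot\omega^1|$ and $|\dot\omega'|$ to be a.e.\ constant on each of $T_{\mathrm{int}}$ and $T_{\mathrm{bd}}$, uniquely determines the time splits $\tau_i^*$, and locks the incidence ratio $|\dot\omega'|/|\dot\omega^1|=1/\sqrt{\ba}=\tan\alpha$ at entry/exit from $\pD$, thereby recovering the three-segment structure and uniqueness of $\gamma^{xy}$. The main obstacle I anticipate is in Step 3: the set $T_{\mathrm{bd}}$ may be topologically wild (the path can cross between interior and boundary infinitely often), and one must carefully justify the bookkeeping constraint $V\geq x_1+y_1$ whenever $\tau_2>0$, together with the Stampacchia-type fact $\dot\omega^1=0$ a.e.\ on $T_{\mathrm{bd}}$ for an arbitrary $H^1$ path. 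Once those measure-theoretic subtleties are dispatched, everything reduces to the elementary convex calculus of Step 2.
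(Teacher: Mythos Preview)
Your proof is correct and takes a genuinely different route from the paper. The paper first establishes weak lower semicontinuity of $\uC$ via a hands-on $Z/J_\eta$ decomposition (splitting times according to whether the \emph{limit} curve is on the boundary, then using uniform convergence to push the approximants into the interior on $J_\eta$), and afterwards proves $c(x,y)=\min\int\uL$ by a purely geometric case analysis: in each configuration of $x,y$ relative to $\cone(x)$ it argues that an optimal path must or must not touch $\pD$, and then optimizes by hand over the entry/exit points $z_{\mathrm{in}},z_{\mathrm{out}}$. Your Steps~2--3 replace this case analysis by a single Cauchy--Schwarz reduction: splitting any competitor into interior/boundary epochs and applying the inequality $\frac{u^2}{p}+\frac{v^2}{q}\geq\frac{(u+v)^2}{p+q}$ twice, you land exactly on the function $\bar I_{\mathrm{st}}(x,y,L)=\frac{(x_1+y_1)^2}{2(1-L)}+\frac{|y'-x'|^2}{2(1+\ba L)}$ with $L=\tau_2$ the boundary occupation fraction, so that $\min_L$ gives $I_{\mathrm{st}}$ and Lemma~\ref{lem:min_I1_I2} closes the argument. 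This is more economical than the paper's computation and, pleasantly, explains \emph{why} the dynamical cost matches the static one: the variable $L$ in \eqref{eq:def_I2bar} is literally the fraction of time a geodesic spends on $\pD$. The paper's approach, in return, makes the geometry of geodesics (the angle $\alpha$, the three segments) more transparent from the outset.

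One small caveat on Step~1: the Lagrangian $\uL$ as written (with value $+\infty$ for $x\in\pD$, $q\notin T_x\pD$) is \emph{not} jointly lower semicontinuous, so ``standard convex-integrand theory'' does not apply verbatim. The clean fix is to replace $\uL$ by the unconstrained $\tilde L(x,q)=\frac12|q|^2\1_{\{x_1>0\}}+\frac{1}{2a}|q|^2\1_{\{x_1=0\}}$, which \emph{is} jointly lsc for $a>1$; Stampacchia's lemma (which you already invoke) guarantees $\int\uL(\omega,\dot\omega)=\int\tilde L(\omega,\dot\omega)$ for every $\omega\in H^1(0,1;\bD)$, so Ioffe-type theorems then give the weak lsc of $\uC$. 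Alternatively, since your Steps~2--3 already identify the minimum and an explicit minimizer, the direct method is not needed for existence and you only need lsc for the bare statement of the theorem.
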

\noindent
Not only this is a dynamical characterization of our intrinsic distance, but it will also be useful in our next section~\ref{sec:dynamical_LDP} to derive the sample-path LDP from the static one in Theorem~\ref{theo:LDP_static}.
\\

Since $\uL(x,\cdot)$ is quadratic in its second argument, an immediate consequence is that our static cost, as a short-time rate function, truly determines the intrinsic distance of SBM:
\begin{cor}
 $c(x,y)=d^2(x,y)$ is a squared distance.
\end{cor}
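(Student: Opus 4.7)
The plan is to realize $c(x,y)$ as (half the square of) the length distance associated with a Finsler-type speed on $\bD$, and then invoke standard length-space arguments. Concretely, write $\uL(x,q) = \tfrac12 F(x,q)^2$ with $F(x,q) = G(x)^{1/2}|q|$, where $G(x) = 1$ for $x\in D$ and $G(x) = 1/a$ for $x\in\pD$ (with the convention $F(x,q)=+\infty$ for non-tangential $q$ when $x\in\pD$ and $a>1$; for $a\leq 1$ simply $F(x,q)=|q|$ everywhere). The key structural input is that $F(x,\cdot)$ is positively $1$-homogeneous while $\uL(x,\cdot)$ is quadratic, so that the length $\ell(\omega)\coloneqq\int_0^1 F(\omega_t,\dot\omega_t)\,\rd t$ is reparametrization-invariant whereas the action $\uC(\omega)=\int_0^1\uL(\omega_t,\dot\omega_t)\,\rd t$ is not.

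Set $d(x,y) = \inf\{\ell(\omega):\omega\in H^1(0,1;\bD),\,\omega_0=x,\omega_1=y\}$. The first step is to show $c(x,y) = \tfrac12 d(x,y)^2$, so that it suffices to prove $d$ is a distance. The inequality $2c\geq d^2$ is immediate from Cauchy--Schwarz, since for any admissible $\omega$, $2\uC(\omega) = \int_0^1 F^2\,\rd t \geq (\int_0^1 F\,\rd t)^2 = \ell(\omega)^2$. For the reverse inequality I would pick the explicit minimizing geodesic $\gamma^{xy}$ provided by Theorem~\ref{theo:c_static_dynamic-geodesics}, reparametrize it to constant $F$-speed, and observe that equality then holds in Cauchy--Schwarz, yielding $2c(x,y) = \ell(\gamma^{xy})^2\geq d(x,y)^2$.

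The distance axioms for $d$ are then standard length-space checks. Nonnegativity is trivial; symmetry $d(x,y)=d(y,x)$ comes from reversing time $\tilde\omega_t = \omega_{1-t}$, which is legitimate because $F$ depends on $q$ only through $|q|$; and the triangle inequality follows from concatenating a path from $x$ to $y$ with a path from $y$ to $z$ and exploiting the reparametrization-invariance of $\ell$. The identity of indiscernibles is the final step: $d(x,y)=0$ implies $c(x,y)=0$, and the explicit formulas \eqref{eq:c_mu<0}--\eqref{eq:c_mu>0} force $x=y$, either immediately from the Euclidean alternative, or, in the non-cone case, from $x_1+y_1=0$ and $|y'-x'|=0$, hence $x_1=y_1=0$ and $x'=y'$.

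The only mildly delicate point I anticipate is the reparametrization-by-$F$-arc-length in the regime $a>1$, since $G$ is discontinuous across $\pD$ and a generic $H^1$ curve could oscillate between interior and boundary. This is precisely why I would not work with arbitrary competitors but instead with the explicit minimizers of Theorem~\ref{theo:c_static_dynamic-geodesics}: since these are concatenations of finitely many straight segments, each lying entirely in $D$ or entirely on $\pD$, $F$ is constant on each segment and the global reparametrization to constant $F$-speed reduces to an elementary piecewise linear change of time variable.
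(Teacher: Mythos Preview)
Your overall strategy---realize $c$ via a Finsler length structure $F(x,q)=G(x)^{1/2}|q|$ and verify the distance axioms for the induced length metric---is sound and close in spirit to the paper, but there is a genuine gap in your ``reverse inequality''. After establishing $2c\geq d^2$ by Cauchy--Schwarz, you propose to prove $2c\leq d^2$ by taking the $\uC$-minimizer $\gamma^{xy}$, reparametrizing to constant $F$-speed, and concluding $2c(x,y)=\ell(\gamma^{xy})^2\geq d(x,y)^2$. But this is the \emph{same} direction you already had: $\ell(\gamma^{xy})\geq d$ by definition of the infimum, so you have only re-proved $2c\geq d^2$. To get $2c\leq d^2$ you must start from a near-length-minimizer $\omega$ (not the energy minimizer), reparametrize \emph{it} to constant $F$-speed to obtain $\tilde\omega$, and conclude $2c\leq 2\uC(\tilde\omega)=\ell(\omega)^2$; taking the infimum over $\omega$ then gives $2c\leq d^2$. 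This requires precisely the reparametrization of arbitrary $H^1$ competitors that you explicitly decided to avoid in your final paragraph. (That worry is in fact unfounded: $t\mapsto G(\omega_t)^{1/2}|\dot\omega_t|$ lies in $L^1(0,1)$ for any $H^1$ curve regardless of how it oscillates between $D$ and $\pD$, so arc-length reparametrization goes through with no regularity issues.)

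The paper sidesteps the whole $c=\tfrac12 d^2$ identification and verifies the three axioms for $\sqrt{c}$ directly from the dynamical representation \eqref{eq:c=min_L}: identity of indiscernibles via the coercivity $\uL(x,q)\geq C|q|^2$ applied to the minimizing geodesic; symmetry from $\uL(x,q)=\uL(x,-q)$; and the triangle inequality by concatenating $\gamma^{xy}$ on $[0,\lambda]$ with $\gamma^{yz}$ on $[\lambda,1]$, which by $2$-homogeneity of $\uL(x,\cdot)$ gives $c(x,z)\leq\tfrac1\lambda c(x,y)+\tfrac1{1-\lambda}c(y,z)$, then optimizing in $\lambda$. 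This is more direct and avoids your gap entirely; your Finsler framing is more conceptual but, as written, needs the fix above to close.
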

\begin{proof}
This is very classical so we only sketch the proof.
Assume that $c(x,y)=0$.
According to \eqref{eq:def_La}\eqref{eq:def_Lbar} there holds $\uL(x,q)\geq C|q|^2$ for $C=\min\{1/2,1/2a\}>0$.
In that case the unique minimizing geodesic must satisfy $0=c(x,y)=\int_0^1\uL(\g^{xy}_t,\dot\g^{xy}_t)\rd t\geq C\int_0^1|\dot \g^{xy}_t|^2\rd t$, hence $x=\g^{xy}_0=\g^{xy}_1=y$.
The symmetry is obvious in view of \eqref{eq:c=min_L} and $\uL(x,q)=\uL(x,-q)$.
For the triangular inequality, pick any $x,y,z$.
For given $\lambda\in(0,1)$ we can construct an admissible path $(\o_t)_{t\in[0,1]}$ starting from $\o_0=x$, passing through $\o_\lambda=y$ and finally $\o_1=z$ by following first the geodesic $\g^{xy}$ rescaled in time $t\in[0,\lambda]$ and then the geodesic $\g^{yz}$ in time $t\in[\lambda,1]$.
By quadratic scaling this gives a cost $d^2(x,z)\leq \int_0^1\uL(\o_t,\dot\o_t)=\frac 1\lambda d^2(x,y)+\frac{1}{1-\lambda}d^2(y,z)$, and choosing $\lambda=\frac{d(x,y)+d(y,z)}{d(x,y)}$ gives the triangular inequality $d^2(x,z)=[d(x,y)+d(y,z)]^2$.
\end{proof}

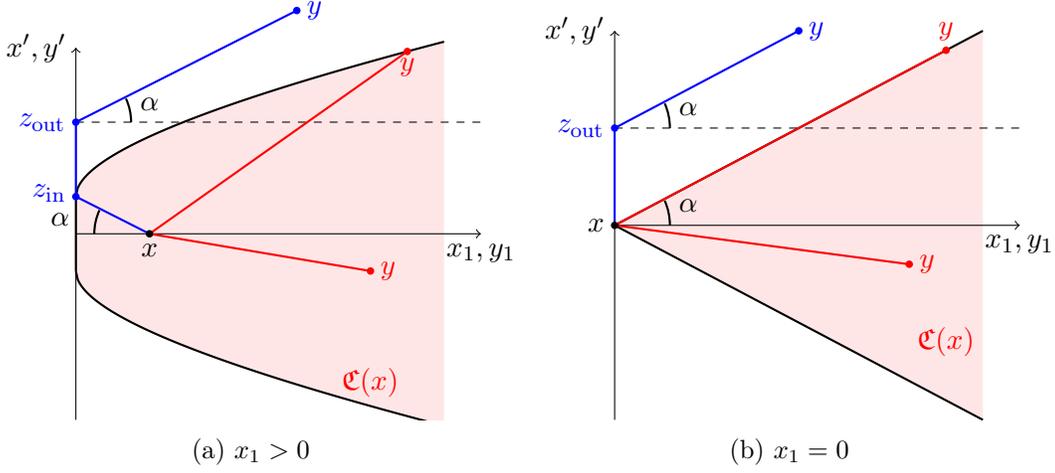
\begin{figure}[h!]
\begin{center}
\begin{subfigure}{0.4\textwidth}
\begin{tikzpicture}[]
\begin{axis}[ticks=none,axis lines=none,
xmin=-2, xmax=12,
ymin=-5, ymax=6.5
]

\addplot [domain=0:10, samples=200, name path=f, thick]
{1+.1*x+sqrt(x)};
\addplot [domain=0:10, samples=200, name path=g, thick]
{-1-.1*x-sqrt(x)};
\draw [thick] (axis cs:0,-1) -- (axis cs:0,1);
\addplot[red!10, opacity=0.4] fill between [of= f and g];
\draw [->] (axis cs:0,0) -- (axis cs:11,0);
\draw [->] (axis cs:0,-5) -- (axis cs:0,5);
\draw [blue, thick] (axis cs:2,0) -- (axis cs:0,1) -- (axis cs:0,3) -- (axis cs:6,6);
\draw [dashed] (axis cs:0,3) -- (axis cs:11,3);

\draw [thick] (axis cs:1.5,3) arc (0:25:1.5);
\node at (axis cs:2,3.5)[]{$\alpha$};
\draw [thick] (axis cs:0.5,0) arc (180:155:1.5);
\node at (axis cs:.1,.4)[left]{$\alpha$};

\node at (axis cs:0,1)[blue,circle,fill,inner sep=1pt]{};
\node at (axis cs:0,1.1)[left,blue]{$z_\text{in}$};
\node at (axis cs:0,3)[blue,circle,fill,inner sep=1pt]{};
\node at (axis cs:0,3)[left,blue]{$z_\text{out}$};
\node [below] at (axis cs:11,0) {$x_1,y_1$};
\node [left] at (axis cs:0,5) {$x',y'$};

\node at (axis cs:6,6)[blue,circle,fill,inner sep=1pt]{};
\node [right,blue] at (axis cs:6,6) {$y$};

\draw [red, thick] (axis cs:2,0) -- (axis cs:9,4.9);
\node at (axis cs:9,4.9)[red,circle,fill,inner sep=1pt]{};
\node [below,red] at (axis cs:9,5) {$y$};
\node [red] at (axis cs:8,-4) {$\cone(x)$};

\draw [red, thick] (axis cs:2,0)  -- (axis cs:8,-1);
\node at (axis cs:8,-1)[circle,fill,inner sep=1pt,red]{};
\node [right,red] at (axis cs:8,-1) {$y$};
\node at (axis cs:2,0)[circle,fill,inner sep=1pt]{};
\node [below] at (axis cs:2,0) {$x$};
\end{axis}
  \end{tikzpicture}
  \caption{$x_1>0$}
\label{fig:cost_geodesics_x1>0}
  \end{subfigure}
\hspace{.3cm}
\begin{subfigure}{0.4\textwidth}
\begin{tikzpicture}[]
\begin{axis}[ticks=none,axis lines=none,
xmin=-2, xmax=12,
ymin=-5, ymax=6
]

\addplot [domain=0:10, samples=200, name path=f, thick]
{.5*x};
\addplot [domain=0:10, samples=200, name path=g, thick]
{-.5*x};
\addplot[red!10, opacity=0.4] fill between [of= f and g];
\draw [->] (axis cs:0,0) -- (axis cs:11,0);
\draw [->] (axis cs:0,-5) -- (axis cs:0,5);

\draw [blue, thick] (axis cs:0,0)  -- (axis cs:0,2.5) -- (axis cs:5,5);
\draw [dashed] (axis cs:0,2.5) -- (axis cs:11,2.5);
\draw [thick] (axis cs:1.5,2.5) arc (0:25:1.5);
\node at (axis cs:2,3)[]{$\alpha$};

\node at (axis cs:0,2.5)[blue,circle,fill,inner sep=1pt]{};
\node at (axis cs:0,2.5)[blue,left]{$z_\text{out}$};
\node at (axis cs:5,5)[blue,circle,fill,inner sep=1pt]{};
\node [right,blue] at (axis cs:5,5) {$y$};
\draw [thick] (axis cs:1.5,0) arc (0:25:1.5);
\node at (axis cs:2,.5)[]{$\alpha$};

\draw [red, thick] (axis cs:0,0)  -- (axis cs:9,4.5);
\node at (axis cs:9,4.5)[circle,fill,inner sep=1pt,red]{};
\node [above,red] at (axis cs:9,4.5) {$y$};

\draw [red, thick] (axis cs:0,0)  -- (axis cs:8,-1);
\node at (axis cs:8,-1)[circle,fill,inner sep=1pt,red]{};
\node [right,red] at (axis cs:8,-1) {$y$};


\node [below] at (axis cs:11,0) {$x_1,y_1$};
\node [left] at (axis cs:0,5) {$x',y'$};
\node at (axis cs:0,0)[circle,fill,inner sep=1pt]{};
\node [left] at (axis cs:0,0) {$x$};

\node [red] at (axis cs:9,-3) {$\cone(x)$};
\end{axis}
  \end{tikzpicture}
  \caption{$x_1=0$}
\label{fig:cost_geodesics_x1=0}
\end{subfigure}
\end{center}
\caption{Geodesics for all possible configurations of $(x,y)\in \bD^2$}
\label{fig:fig:cost_geodesics}
\end{figure}

\begin{proof}[Proof of Theorem~\ref{theo:c_static_dynamic-geodesics}]
For $a\leq 1$, $\uL(x,q)=\frac 12|q|^2$ is the standard Euclidean cost and the whole statement is obvious so we only address the case $a>1$.

Let us first prove that $C$ is a well-behaved functional.
For $a>1$ we have $\uL=L$, hence by \eqref{eq:def_La}
\begin{equation}
\label{eq:L_coercive}
\uL(x,q)\geq \frac{1}{2a}|q|^2,\hspace{1cm}
\forall\,x\in \bar D,\,q\in T_x\bar D
\end{equation}
(this is of course a very bad estimate if $x\in D$, when $\uL(x,q)=\frac 12|v|^2$).
This immediately gives $\dot H^1$ coercivity of
$$
\omega\mapsto C(\o)\coloneqq \int _0^1 \uL(\o_t,\dot\o_t)\rd t
\geq \frac{1}{2a}\int_0^1|\dot\o_t|^2\rd t.
$$
For the weak lower semi-continuity, consider any fixed endpoints $\o_0=x,\o_1=y$ and let $\omega_n\rightharpoonup \o$ weakly in $H^1$.
By usual compactness $H^1\subset\subset C([0,1])$ we can assume that $\o_n\to \o$ uniformly.
Let
$$
Z\coloneqq \Big\{t\in (0,1):\quad \o_t\in \pD\Big\}
\qqtext{and}
J\coloneqq (0,1)\setminus Z=\Big\{t\in (0,1):\quad \o_t\in D\Big\}
$$
By standard properties of Sobolev functions we see that $\dot \o_t$ is tangent to $\pD$ for a.e. $t\in Z$.
By standard weak lower semi-continuity we get
$$
\int_Z \uL(\o_t,\dot\o_t)
=
\int_Z\frac{1}{2a}|\dot\o_t|^2
\leq
\liminf\limits_{n\to\infty}\int_Z\frac{1}{2a}|\dot\o_t^n|^2
\leq
\liminf\limits_{n\to\infty}\int_Z \uL(\o^n_t,\dot\o^n_t)\rd t,
$$
where the last very rough inequality is just \eqref{eq:L_coercive}.
In order to get control over $J$, fix an arbitrarily small $\eta>0$ and let
$$
J_\eta
\coloneqq \Big\{t\in [0,1]:\quad \operatorname{dist}(\o_t,\pD)\geq \eta\Big\}\subseteq J.
$$
Since $\o\in H^1$ is uniformly continuous we have that $|J\setminus J_\eta|\to 0$ as $\eta\to 0$.
Crucially, for fixed $\eta$ the uniform convergence $\o^n\to \o$ guarantees that $\operatorname{dist}(\o^n_t,\pD)\geq \eta/2>0$ for sufficiently large $n$.
As a consequence
\begin{multline*}
\int_{J_\eta} \uL(\o_t,\dot \o_t)\rd t
=
\int_{J_\eta} \frac 12|\dot \o_t|^2\rd t
\leq
\liminf\limits_{n\to\infty}
\int_{J_\eta} \frac 12|\dot \o_t^n|^2\rd t
\\
=
\liminf\limits_{n\to\infty}
\int_{J_\eta} \uL(\o^n_t,\dot \o_t^n)\rd t
\leq
\liminf\limits_{n\to\infty}
\int_{J} \uL(\o^n_t,\dot \o_t^n)\rd t,
\end{multline*}
where the last inequality holds simply because $J_\eta\subseteq J$.
Since $\eta>0$ was arbitrary and $|J\setminus J_\eta|\to 0$ as $\eta\to 0$ we conclude that
$$
\int_{J} \uL(\o_t,\dot \o_t)\rd t
=\lim\limits_{\eta\to 0}\int_{J_\eta} \uL(\o_t,\dot \o_t)\rd t
\leq
\liminf\limits_{n\to\infty}
\int_{J} \uL(\o^n_t,\dot \o_t^n)\rd t
$$
and therefore
\begin{multline*}
\int_0^1 \uL(\o_t,\dot \o_t)\rd t
=
\int_Z \uL(\o_t,\dot \o_t)\rd t + \int_J \uL(\o_t,\dot \o_t)\rd t
\\
\leq
\liminf_{n\to\infty}\int_Z \uL(\o^n_t,\dot \o^n_t)\rd t +
\liminf_{n\to\infty}\int_J \uL(\o^n_t,\dot \o^n_t)\rd t
\leq
\liminf_{n\to\infty}\int_0^1 \uL(\o^n_t,\dot \o^n_t)\rd t.
\end{multline*}
This shows that the Lagrangian cost $C$ is lower-semicontinuous and coercive, and we can now focus on the geodesic problem itself.
In other words, for fixed $x,y\in\bD$ let us solve
$$
\min\left\{\int_0^1 \uL(\o_t,\dot \o_t)\rd t:\quad \o\in H^1(0,1;\bar D)\mbox{ with }\o_0=x,\o_1=y\right\}.
$$
First, note that linear interpolation $\o_t=(1-t)x+t y$ is always an admissible competitor with finite cost, hence there exists a minimizing sequence $\{\o^n\}_{n\geq 0}\subset H^1$.
Since the endpoints are fixed, the coercivity \eqref{eq:L_coercive} gives $\|\o^n\|_{H^1}\leq C$ and there exists a weak-$H^1$ limit $\omega$.
By lower-semicontinuity $\o$ is necessarily a minimizer and it suffices to show that it has the right cost $c(x,y)=\int_0^1 L(\o_t,\dot\o_t)\rd t$.
We distinguish cases depending on whether $x,y$ belong to $\pD$ or not.

\begin{enumerate}
 \item
If $x_1=y_1=0$, recall that $\uL(\o_t,\dot\o_t)\geq \frac{1}{2a}|\dot\o_t|^2$ with equality if and only if $\o_t\in \pD$.
Clearly in that case the best one can do is follow the Euclidean geodesic $\gamma^{xy}_t=(1-t)x+ty$ along the vertical boundary, for a cost $c(x,y)=\frac{1}{2a}|y-x|^2=\int_0^1\frac{1}{2a}|\dot \gamma^{xy}_t|^2\rd t$ as in \eqref{eq:c_mu>0}.

\item
Consider now $x_1>0$ and $y_1>0$ as in Figure~\ref{fig:cost_geodesics_x1>0}
\begin{itemize}
 \item
Pick first $y\not\in\cone(x)$, and take $z_\text{in},z_\text{out}\in \pD$ as in Figure~\ref{fig:cost_geodesics_x1>0}, implicitly defined through the condition of making an angle $\alpha$ with the normal (elementary geometry shows this is possible if $y\not\in\cone(x)$).
The blue curve $x\leadsto z_\text{in}\leadsto z_\text{out}\leadsto y$ is always admissible.
Suitably adjusting the constant speed on each segment so that $\uL(\o_t,\dot\o_t)=cst$ for $t\in [0,1]$, the definition of $\sin^2\alpha=\frac 1a$ precisely results in the overall cost $\frac{1}{2a}\left(\sqrt{\ba} (x_1+y_1) + |y'-x'|\right)^2$ as in \eqref{eq:c_mu>0} (we omit the lengthy but straightforward computation here).
Let us check that this is optimal.
Indeed, since $y\in\cone(x) \Leftrightarrow (x,y)\in \cone$, and by definition of $\cone$ this particular cost is strictly less than the Euclidean cost $\frac 12|y-x|^2$, which is clearly the best one could do if the geodesic remained contained in $D$ and away from $\pD$ for all times $t\in[0,1]$ (since then the Lagrangian would just be $\int_0^1\frac 12|\dot\o_t|^2\rd t$).
This means that, for $y\not\in \cone(x)$, a geodesic must eventually go through the boundary at some point.
By continuity, there are a first entry and last exit times $0<t_\text{in}\leq t_\text{out}<1$ such that $Z_\text{in}\coloneqq\o_{t_\text{in}}$ and $Z_\text{out}\coloneqq\o_{t_\text{out}}$ lie on $\pD$.
Clearly in that case the best one can do is to first follow a geodesic $x+\frac{t}{t_\text{in}}(Z_\text{in}-x)$, then a vertical segment $Z_\text{in}+\frac{t-t_\text{in}}{t_\text{out}-t_\text{in}}(Z_\text{out}-Z_\text{in})$, and finally connect to $y$ via $Z_\text{out}+\frac{t-t_\text{out}}{1-t_\text{out}}(y-Z_\text{out})$.
This has overall cost $\int_0^{t_\text{in}}\left|\frac{Z_\text{in}-x}{t_\text{in}}\right|^2\rd t +\int_{t_\text{in}}^{t_\text{out}}\frac 1{2a}\left|\frac{Z_\text{out}-Z_\text{in}}{t_\text{out}-t_\text{in}}\right|^2\rd t +\int_{t_\text{out}}^1\left|\frac{y-Z_\text{out}}{1-t_\text{out}}\right|^2\rd t$.
(The $1/2a$ factor along the vertical path is crucial here!)
Optimizing first in $t_\text{in},_\text{out}$ for fixed $Z_\text{in},Z_\text{out}$, and then minimizing with respect to $Z_\text{in},Z_\text{out}\in \pD$, a tedious but straightforward computation shows that the optimal cost in this case is realized when $Z_\text{in}=z_\text{in}$ and $Z_\text{out}=z_\text{out}$ are precisely defined through the $\alpha$-angle condition as in Figure~\ref{fig:cost_geodesics_x1>0} (we omit again the details).
\item
If now $y\in\cone(x)$ we claim that the Euclidean geodesic with cost $\frac 12|y-x|^2$ is optimal, see again Figure~\ref{fig:cost_geodesics_x1>0}.
Clearly this is the best one can achieve without ever going through $\pD$, so it is enough to prove that any path actually going through $\pD$ has larger cost.
For any such path $\o$, let as before $0<t_\text{in}\leq t_\text{out}<1$ be the first entry and last exit times, with $Z_\text{in}=\o_{t_\text{in}}\in \pD$ and $Z_\text{out}=\o{t_\text{in}}\in\pD$.
Exactly as before, the best strategy is then given explicitly by three consecutive straight lines $x\leadsto Z_\text{in}\leadsto Z_\text{out}\leadsto y$, each with constant speed suitably determined by $t_\text{in}\leq t_\text{out}$.
Again, this can be explicitly optimized first w.r.t to $t_\text{in}\leq t_\text{out}$ for given $Z_\text{in},Z_\text{out}$, and then with respect to the locations of $Z_\text{in},Z_\text{out}$.
The main difference is that the condition that $y\in \cone(x)$ leads now to $Z_\text{in}=Z_\text{out}$, uniquely determined by normal reflection on the boundary (this is geometrically intuitive and clearly gives the shortest path from $x\in D$ to $y\in D$ if forced to go through the boundary at least once, so we skip the details).
The resulting cost turns out to be exactly $\frac 12(|x-_\text{in}|+|Z_\text{out}-y|)^2>\frac 12|x-y|^2$.
This cannot be optimal, since the cost $\frac 12|x-y|^2$ is realized in particular by the Euclidean interpolation.
As a consequence a minimizing curve cannot cross the boundary at any time, and the Euclidean geodesic $\gamma^{xy}_t=(1-t)x+t y$ is therefore the sought geodesic.
\end{itemize}
\item
Finally we consider $x_1=0$ and $y_1>0$ (we already covered $x_1=y_1=0$ and $x_1>0,y_1>0$, so all the remaining cases are covered by simple symmetry $c(x,y)=c(y,x)$).
\begin{itemize}
 \item
Pick first $y\not\in\cone(x)$.
Similarly to a previous case, it is easy to check by hand that if $z_\text{out}$ is determined through the $\alpha$-angle condition as in Figure~\ref{fig:cost_geodesics_x1=0} and the constant speed is suitably adjusted on each segment of the blue curve, the cost is exactly $\frac{1}{2a}\left(\sqrt{\ba} (x_1+y_1) + |y'-x'|\right)^2$.
By \eqref{eq:c_mu>0} this realizes the cost $c(x,y)$.
We claim that this is necessarily optimal in the Lagrangian minimization.
Indeed, by definition of $\cone(x)$ this cost is strictly less than the Euclidean $\frac 12|x-y|^2$, hence a minimizing geodesic must necessarily go through the boundary at some point, until a last exit time $t_\text{out}\in [0,1)$ with $Z_\text{out}\in \pD$.
The case $t_\text{out}=0$ would lead to the suboptimal Euclidean cost and is therefore ruled out.
By definition of $\bar L$ clearly the best one can do is then to move vertically $x\leadsto Z_\text{out}$ along $\pD$ for $t\in [0,t_\text{out}]$ (wandering off of $\pD$ would result in unnecessary oscillations in the horizontal direction as well as more costly vertical motion due to $a>1$), and then remain contained in $D$ for later times $t\in(t_\text{out},1]$.
In this scenario clearly the best strategy is to follow two consecutive straight lines $x\leadsto Z_\text{out}\leadsto y$, and optimizing first w.r.t. $t_\text{out}$ for fixed $Z_\text{out}$, and then w.r.t $Z_\text{out}$, one gets that $Z_\text{out}=z_\text{out}$ is determined exactly by the $\alpha$-angle condition with the right cost.
This means that the blue curve in Figure~\ref{fig:cost_geodesics_x1=0} is optimal as claimed.
\item
Now if $y\in \cone(x)$ the same optimization program easily leads to $Z_\text{out}=x$ and $t_\text{out}=0$, i-e the geodesic immediately exits $\pD$ and never reenters.
But in that case $\o_t\in D$ for all $t\in(0,1]$, the Lagrangian is simply $\int_0^1\frac 12|\dot \omega_t|^2\rd t$, so the minimal cost is realized by Euclidean interpolation as in our statement and the proof is complete.
\end{itemize}
\end{enumerate}
\end{proof}

\section{Sample path Large Deviation Principle}
\label{sec:dynamical_LDP}
Recall that $\O=C([0,1];\bD)$ and ${R^\eps_x}\in \P(\O)$ is the path measure for SBM started from $x\in \bar D$, slowed-down on the scale $\eps>0$ as before.
The goal of this section is to establish the dynamical counterpart of Theorem~\ref{theo:LDP_static}
\begin{theo}
\label{theo:LDP_dynamic}
For fixed $x\in\bar D$ the sequence $\left\{{R^\eps_x}\right\}_{\eps>0}\in \P(\O)$ satisfies a Large Deviation Principle
$$
\boxed{R^\eps_x\underset{\eps\to 0}{\asymp}\exp\left(-\frac 1\eps \uC_x(\o)\right)}
$$
for the uniform topology on $\Omega$, with good rate function
\begin{equation*}
\uC_x(\o)=\iota_{\{\o_0=x\}} + \uC(\o)
\end{equation*}
and $\uC$ as in \eqref{eq:def_C_dyn}.
\end{theo}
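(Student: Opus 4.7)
The plan is to lift the static LDP of Theorem~\ref{theo:LDP_static} to the sample-path level via the Markov property of SBM, using the Lagrangian identification of Theorem~\ref{theo:c_static_dynamic-geodesics} to match rate functions. I would proceed in three main steps: (i) exponential tightness of $\{R^\eps_x\}$ in the uniform topology on $\Omega$, (ii) an LDP for all finite-dimensional marginals via Markov and time-scaling, and (iii) identification of the resulting projective rate function with $\uC$.

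For step (i), the explicit transition kernel \eqref{eq:kernel_2d_pt} should yield a short-time concentration estimate of the form
\begin{equation*}
R^\eps_x\Big(\sup_{|t-s|\leq \delta}|\o_t-\o_s|\geq \eta\Big)\leq \frac{C}{\delta}\exp\left(-\frac{c\,\eta^2}{\eps\,\delta}\right),
\end{equation*}
uniformly in small $\eps,\delta,\eta$. The interior piece $g^0_{\eps\delta}(x_1,y_1)g(\eps\delta,y'-x')$ contributes standard sub-Gaussian tails. For the sticky piece, the key a priori bound $L_t=\t O_t\leq \t t$ ensures that after conditioning on the local time the vertical motion $X'$ is a centered Gaussian with variance at most $a\,\eps\delta$, again with sub-Gaussian tails; the singular $1/(1-L)$ weight which already appeared in the proof of Theorem~\ref{theo:LDP_static} is tamed in the same way. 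A dyadic chaining of Chentsov/Garsia--Rodemich--Rumsey type then produces compact sublevelsets of the modulus of continuity, yielding exponential tightness via Arzel\`a--Ascoli.

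For step (ii), the Markov property decomposes the joint law on any partition $\tau_N:0=t_0<\cdots<t_N=1$ as
\begin{equation*}
R^\eps_{\tau_N,x}(\rd z_0\cdots\rd z_N)=\delta_x(\rd z_0)\prod_{i=1}^N p_{\eps(t_i-t_{i-1})}(z_{i-1},\rd z_i),
\end{equation*}
where each transition factor coincides with $\rho^{\eps(t_i-t_{i-1})}_{z_{i-1}}$ and therefore, by Theorem~\ref{theo:LDP_static} rescaled with $\eps\mapsto \eps(t_i-t_{i-1})$, satisfies an LDP with rate $c(z_{i-1},z_i)/(t_i-t_{i-1})$. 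Since $c$ is a continuous good rate function on $\bar D^2$, the LDP is uniform in the starting point in the sense required by a standard Markov-chain composition argument, yielding the finite-dimensional LDP with rate
\begin{equation*}
\iota_{\{z_0=x\}}+\sum_{i=1}^N\frac{c(z_{i-1},z_i)}{t_i-t_{i-1}}.
\end{equation*}
The $1/(t_i-t_{i-1})$ factors are the manifestation of the scaling identity $c_\tau(x,y)=c(x,y)/\tau$ inherited from $2$-homogeneity of $\uL$ in the velocity variable. Combined with the exponential tightness of step (i), the Dawson--G\"artner projective limit theorem then lifts this to an LDP on $\Omega$ with rate $\uC_x(\o)=\iota_{\{\o_0=x\}}+J(\o)$, where $J(\o)\coloneqq \sup_{N,\tau_N}\sum_{i=1}^N c(\o_{t_{i-1}},\o_{t_i})/(t_i-t_{i-1})$.

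Step (iii) amounts to proving $J=\uC$. The inequality $J\leq \uC$ follows by restricting an $H^1$ curve $\o$ to each subinterval as a competitor in the variational characterisation of $c$ from Theorem~\ref{theo:c_static_dynamic-geodesics}, then summing. Conversely, if $J(\o)<\infty$, the coercivity $\uL(\cdot,q)\geq |q|^2/(2a)$ applied to each geodesic piece on a fine partition, together with Cauchy--Schwarz, forces $\o\in H^1$; letting $|\tau_N|\to 0$ the Riemann sums then converge to $\int_0^1\uL(\o_s,\dot\o_s)\rd s$ by the weak lower semi-continuity of $\uC$ on $H^1$ proved in Theorem~\ref{theo:c_static_dynamic-geodesics}. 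The main obstacle I anticipate lies in step (i): while the interior Gaussian bounds are routine, controlling the sticky contribution requires carefully decoupling the random occupation time from the vertical Gaussian part and dealing with the $1/(1-L)$ singularity in \eqref{eq:kernel_2d_pt} uniformly in $\eps$. A subsidiary delicate point in step (iii) for $a<1$ is that one must work with the relaxed Lagrangian $\uL=L|_{a=1}$ rather than $L$ itself, but since both $c$ and $\uC$ already coincide with their Euclidean analogues in this regime, the matching causes no further trouble.
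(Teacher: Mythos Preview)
Your overall architecture matches the paper's: finite-dimensional LDP via Chapman--Kolmogorov (your step~(ii) is the paper's Lemma~\ref{lem:LDP_slicing}), Dawson--G\"artner to pass to the projective limit, identification of the projective rate with $\uC$ via the Lagrangian characterisation and lower semicontinuity of Theorem~\ref{theo:c_static_dynamic-geodesics} (your step~(iii) is exactly the proof of Proposition~\ref{prop:LDP_dynamic_pointwise}), and finally an upgrade from the pointwise to the uniform topology via exponential tightness.

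The one genuine difference is how exponential tightness is obtained. You propose to extract sub-Gaussian increment bounds directly from the kernel \eqref{eq:kernel_2d_pt} and then chain \`a la Chentsov/GRR. The paper instead bypasses the kernel entirely and works with the SDE representation \eqref{eq:SDE_plane}: the vertical component is handled via $X'_t=\sqrt a\,\tilde B'_{O_t}+\hat B'_{t-O_t}$ together with the deterministic bound $0\leq O_t-O_s\leq t-s$ and standard Brownian maximal inequalities, while the horizontal one is controlled by writing $X^1_t=|x_1+B_t-\int_0^t\1_{\{S_\tau=0\}}\rd B^0_\tau|$ and applying Doob's submartingale inequality to $\exp(\lambda Y_t)$ followed by Gr\"onwall. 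Both routes yield the required $\exp(-c\eta^2/\eps\delta)$ estimate; the SDE approach is more direct (no chaining, and the $1/(1-L)$ singularity never reappears), while your kernel route would reuse machinery already built for the static LDP. One caution on your step~(ii): composing LDPs along a Markov chain is not automatic from the pointwise-in-$x$ static result; it requires either uniformity of the bounds in the starting point or, as the paper does, a direct repetition of the static proof over the product $\bar E_1\times\dots\times\bar E_N$.
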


This is nothing but a Schilder's theorem for sticky-reflected Brownian motion.
Our interest for such dynamical statement is twofold.
First, it provides information at the path level, which conveys more information than the static LDP in Theorem~\ref{theo:LDP_static}.
Second, this allows a direct application abstract results guaranteeing Gamma-convergence of the $\eps$-Schr\"odinger problem to the deterministic counterpart, see e.g. \cite[prop. 2.5 and thm. 2.7]{leonard2012schrodinger}.

Note that our static result from Theorem~\ref{theo:LDP_static} asserts nothing but the LDP for the initial-terminal joint distribution for $\delta_x\otimes \rho^\eps_x=(e_0,e_1)\pf {R^\eps_x}$.
Just as in one of the classical proofs of Schilder's theorem for pure Brownian motion, the main and standard idea to establish Theorem~\ref{theo:LDP_dynamic} will be to perform a time slicing, retrieve a discrete LDP via the Chapman-Kolmogorov property and iterated applications of our previous static LDP, and finally apply the Dawson-G\"artner theorem to recover the dynamic LDP by projective limit.
More precisely, we will first establish
\begin{prop}
 \label{prop:LDP_dynamic_pointwise}
 Theorem~\ref{theo:LDP_dynamic} holds for the topology of pointwise convergence on $\O$.
\end{prop}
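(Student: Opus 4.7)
The plan is to apply the Dawson--G\"artner projective limit theorem \cite[Theorem 4.6.1]{dembo2009large}. The topology of pointwise convergence on $\Omega$ is the projective limit of the product topologies on $\bar D^{n+1}$ via the finite-dimensional evaluation maps $e_\pi:\omega\mapsto (\omega_{t_0},\omega_{t_1},\dots,\omega_{t_n})$ indexed by partitions $\pi=(0=t_0<t_1<\dots<t_n=1)$ of $[0,1]$. Accordingly it suffices to establish an LDP for the finite-dimensional marginals $(e_\pi)_\# R^\eps_x\in\P(\bar D^{n+1})$ for every such $\pi$, and then to identify the resulting projective limit rate function with $\uC_x$.

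First, I would derive the finite-dimensional LDP from the static one as follows. By the Markov property of SBM the marginal $(e_\pi)_\# R^\eps_x$ is the product kernel
\begin{equation*}
\delta_x(\rd y_0)\prod_{i=1}^n p_{\eps(t_i-t_{i-1})}(y_{i-1},\rd y_i).
\end{equation*}
Theorem~\ref{theo:LDP_static} applied with the rescaling $\eps\mapsto\eps(t_i-t_{i-1})>0$ gives $p_{\eps(t_i-t_{i-1})}(y_{i-1},\rd y_i)\asymp\exp\big(-\frac{1}{\eps}\frac{c(y_{i-1},y_i)}{t_i-t_{i-1}}\big)$ for every $i$. Since $(x,y)\mapsto c(x,y)$ is continuous and a good rate function, and since the static upper/lower bounds of Theorem~\ref{theo:LDP_static} were proved uniformly enough on sets of bounded cost, a routine Laplace-type gluing argument (combining the upper bound with compactness of sublevel sets and the lower bound with restriction to a small neighborhood of the optimal $(y_1^*,\dots,y_n^*)$) yields the joint LDP
\begin{equation*}
(e_\pi)_\# R^\eps_x(\rd y_0\dots\rd y_n)\asymp\exp\left(-\frac 1\eps I_\pi(y_0,\dots,y_n)\right),\qquad
I_\pi(y_0,\dots,y_n)=\iota_{\{y_0=x\}}+\sum_{i=1}^n\frac{c(y_{i-1},y_i)}{t_i-t_{i-1}}.
\end{equation*}

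Dawson--G\"artner then produces an LDP on $\Omega$ for the pointwise topology with good rate function
\begin{equation*}
I(\o)=\sup_\pi I_\pi(\o_{t_0},\dots,\o_{t_n})=\iota_{\{\o_0=x\}}+\sup_\pi \sum_{i=1}^n\frac{c(\o_{t_{i-1}},\o_{t_i})}{t_i-t_{i-1}}.
\end{equation*}
It remains to identify $I(\o)=\uC_x(\o)$. Here I would invoke the corollary after Theorem~\ref{theo:c_static_dynamic-geodesics}: $c=d^2$ is a squared intrinsic distance, and the Lagrangian representation $c(x,y)=\min\int_0^1\uL(\g_t,\dot\g_t)\rd t$ identifies the above supremum, by the standard time-slicing lemma for quadratic actions, with $\int_0^1\uL(\o_t,\dot\o_t)\rd t$ whenever $\o\in H^1$ and with $+\infty$ otherwise. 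The $\leq$ direction follows from quadratic scaling, plugging geodesic segments between $(\o_{t_{i-1}},\o_{t_i})$; the $\geq$ direction uses $c(\o_{t_{i-1}},\o_{t_i})\leq (t_i-t_{i-1})\int_{t_{i-1}}^{t_i}\uL(\o_t,\dot\o_t)\rd t$ by admissibility of $\o|_{[t_{i-1},t_i]}$ in the dynamical minimization, summed over $i$.

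The main obstacle I anticipate lies in the finite-dimensional LDP step: promoting a sequence of two-point LDPs into a joint LDP for the product kernel is not automatic because each factor depends measurably on the previous endpoint. The cleanest route is to check that for any open/closed set $U\subset \bar D^{n+1}$ one has $\eps\log (e_\pi)_\# R^\eps_x(U)\to -\inf_U I_\pi$, by performing the analysis of Section~\ref{sec:static} (splitting into interior and sticky contributions, and controlling local-time singularities) on the iterated convolution of kernels --- relying on the continuity of $c$ and the explicit $\eps$-polynomial prefactors in \eqref{eq:upper_estimate_rho_int}--\eqref{eq:LDP_lower_bound_I1min} so that the logarithmic scale is unaffected by composition. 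Once this uniform control is in hand, the remaining identification of $I$ with $\uC$ is purely deterministic and rests on Theorem~\ref{theo:c_static_dynamic-geodesics}.
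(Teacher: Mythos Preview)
Your proposal is correct and follows the same route as the paper (finite-dimensional LDP via Chapman--Kolmogorov iterating the static estimates of Section~\ref{sec:static}, then Dawson--G\"artner, then identification of the projective rate with $\uC_x$ through Theorem~\ref{theo:c_static_dynamic-geodesics}). The paper is only slightly more explicit in the identification step: your ``$\leq$ direction'' via geodesic segments requires passing to the limit along the piecewise-geodesic interpolant $\o^N\rightharpoonup\o$ and invoking the weak-$H^1$ lower semicontinuity of $\uC$ established in Theorem~\ref{theo:c_static_dynamic-geodesics}, and the case $\o\notin H^1$ is handled separately via the coercivity $c(x,y)\geq\frac{1}{2a}|x-y|^2$.
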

To this end, fix a large $N\in \N$ and take any partition $\tau_N$ of the time interval $[0,1]$
$$
0=t_0<t_1<\dots<t_N=1
$$
with size $|\tau_N|=\max_i|t_{i+1}-t_i|\to 0$ as $N\to\infty$.
For a fixed curve $\o\in \O$ with $\o_0=x$ we denote
$$
y_j\coloneqq \o_{t_j}, \hspace{1cm}j=0,\dots ,N,
$$
and
$$
R^{\eps,N}_x:=(e_{t_0},\dots,e_{t_N})\pf {R^\eps_x}
\hspace{1cm}\in \P(\bar D^{N+1}).
$$
In other words,
$$
R^{\eps,N}_x(E_0\times\dots\times E_N)={R^\eps_x}(\o_{t_0}\in E_0,\dots ,\omega_{t_N}\in E_N)
=
\PP_x(X^\eps_{t_0}\in E_0,\dots,X^\eps_{t_N}\in E_N)
$$
for the slowed-down sticky Brownian motion $(X^{\eps}_t)_{t\in[0,1]}$.
\begin{lem}
\label{lem:LDP_slicing}
For any fixed $x\in \bD$ and $N<+\infty$ the sequence $\left\{R^{\eps,N}_x\right\}_{\eps>0}$ satisfies the LDP
$$
R^{\eps,N}_x
\underset{\eps\to 0}{\asymp}
\exp\left(-\frac 1\eps C_x^N(y_0,\dots,y_N)\right)
$$
with good rate function
\begin{equation}
\label{eq:def_discrete_Lagrangian_LN}
C^N_x(y_0,\dots,y_N)
\coloneqq
\iota_{\{y_0=x\}} + C^N(y_0,\dots,y_N)
\qtext{with}
 C^N(y_0,\dots,y_N)
 \coloneqq
 \sum\limits_{j=0}^{N-1}\frac{c(y_j,y_{j+1})}{t_{j+1}-t_j},
\end{equation}
where the static cost $c(x,y)$ is exactly as in \eqref{eq:c_mu<0}\eqref{eq:c_mu>0}.
\end{lem}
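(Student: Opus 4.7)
The plan is to reduce this joint $(N{+}1)$-fold LDP to iterated applications of the static one-step LDP from Theorem~\ref{theo:LDP_static}, by exploiting the Markov property of SBM and proceeding by induction on $N$.

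By the Markov property and Chapman--Kolmogorov,
\[
R^{\eps,N}_x(\rd y_0,\ldots,\rd y_N)=\delta_x(\rd y_0)\prod_{j=0}^{N-1}p_{\eps\Delta_j}(y_j,\rd y_{j+1}),\qquad \Delta_j\coloneqq t_{j+1}-t_j.
\]
Applying Theorem~\ref{theo:LDP_static} with small parameter $\eps\Delta_j$ in place of $\eps$ yields a one-step LDP $p_{\eps\Delta_j}(y,\rd z)\asymp\exp(-c(y,z)/(\eps\Delta_j))$, pointwise in $y$, with rate function $c(y,\cdot)/\Delta_j$. The base case $N=1$ of the induction is then exactly Theorem~\ref{theo:LDP_static} (with $\Delta_0=1$).

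For the inductive step I would write $R^{\eps,N}_x(\rd y_0,\ldots,\rd y_N)=R^{\eps,N-1}_x(\rd y_0,\ldots,\rd y_{N-1})\otimes p_{\eps\Delta_{N-1}}(y_{N-1},\rd y_N)$ and combine the inductive LDP with the one-step LDP. For a closed set $K\subset\bD^{N+1}$ one iterates the explicit upper estimates \eqref{eq:upper_estimate_rho_int}--\eqref{eq:upper_estimate_rho_st} integrated against $\mu^{\otimes N}$, the polynomial $\eps$-prefactors being absorbed by the $\eps\log$ limit. For an open set $U$ containing a point $(x,y_1^*,\ldots,y_N^*)$ with finite $C^N_x$, one picks small open neighborhoods $V_j\ni y_j^*$ such that $\{x\}\times V_1\times\cdots\times V_N\subset U$ and iterates the lower estimates \eqref{eq:LDP_lower_bound_I2min}--\eqref{eq:LDP_lower_bound_I1min} on each factor, using the continuity of $c$ to evaluate the limiting rate at the chosen midpoints. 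Goodness of $C^N_x$ is immediate from the continuity and coercivity of $c$ already proved in Theorem~\ref{theo:LDP_static}.

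The main technical obstacle is that Theorem~\ref{theo:LDP_static} is stated pointwise in the starting point, whereas the inductive step really requires a version that is \emph{uniform for the starting point varying over compact subsets of $\bD$} (so that the inner iterated integrations can be controlled). Close inspection of the bounds \eqref{eq:upper_estimate_rho_int}--\eqref{eq:LDP_lower_bound_I1min} shows that the multiplicative constants appearing there depend continuously on the starting point, being built out of Gaussian-type integrals, and are thus uniformly bounded on compacta; combined with continuity of $c$ on $\bD^2$, this yields the required uniform one-step LDP and closes the induction. A secondary subtlety is the atom of $\mu$ along $\pD$, which forces separate bookkeeping whenever some $y_j\in\pD$, but this is handled routinely since $(x,y)\mapsto c(x,y)$ is continuous across the boundary.
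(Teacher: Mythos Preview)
Your proposal is correct and follows essentially the same approach as the paper: both use Chapman--Kolmogorov to factor $R^{\eps,N}_x$ as a product of one-step kernels and then iterate the explicit upper and lower bounds \eqref{eq:upper_estimate_rho_int}--\eqref{eq:LDP_lower_bound_I1min} from the static proof, localizing around a minimizer for the lower bound and exploiting globality of the estimates for the upper bound. The only cosmetic difference is that you frame the iteration as an induction on $N$ whereas the paper carries out all $N$ steps simultaneously; your explicit identification of the need for uniformity of the one-step bounds in the starting point is exactly what the paper alludes to when it says the upper estimates ``were actually global in $x,y,L$ (at least to a sufficient extent so that they can be iterated $N$ times).''
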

\begin{proof}
The argument is very similar to the proof of Theorem~\ref{theo:LDP_static} so we only sketch the details.
Pick any Borel sets $E_0,\dots,E_N\subset \bD$, and let us denote for convenience
$$
\eps_j:=\eps(t_{j+1}-t_{j})
\qqtext{and}
E\coloneqq E_1\times\dots\times E_N.
$$
By the Chapman-Kolmogorov property we have that
\begin{multline*}
 R^{\eps,N}_x(E_0\times\dots\times E_N)
 =\1_{\{x\in E_0\}}\int_{E} p_{\eps_0}(x,\rd y_1)p_{\eps_1}(y_1,\rd y_2)\dots p_{\eps_{N-1}}(y_{N-1},\rd y_N)
 \\
 =\1_{\{x\in E_0\}}
 \int_{E} \rho^{\eps_0}_x(\rd y_1)\rho^{\eps_1}_{y_1}(\rd y_2)\dots \rho^{\eps_{N-1}}_{y_{N-1}}(\rd y_N).
\end{multline*}
Just as in \eqref{eq:def_Reps_2d}\eqref{eq:def_rho1}\eqref{eq:def_rho2_rhobar2}\eqref{eq:def_rho2} the kernels $\rho^{\eps_j}_{y_j}(\rd y_{j+1})$ are explicitly given by
$$
\rho^{\eps_j}_{y_j}(\rd y_{j+1})
=
\rho^{\eps_j}_\text{int}(y_j,y_{j+1})\rd y_{j+1} + \left(\int_0^1\bar \rho^{\eps_j}_\text{st}(y_j,y_{j+1},L_j)\rd L_j\right)\mu(\rd y_{j+1}).
$$
For finite $N$, it is a tedious but rather straightforward exercise to adapt $N$ consecutive times the same argument as in the proof of Theorem~\ref{theo:LDP_static} to get the LDP for $R^{\eps,N}_x$ as $\eps\to 0$.
Let us just sketch the idea.
For the LDP lower bound, pick first a minimizer $y^*=(y^*_0,\dots,y^*_N)$ of $C^N_x$ in $\bar E=\bar E_1\times\dots\times\bar E_N$.
In \eqref{eq:def_discrete_Lagrangian_LN} clearly one has two possible rate functions $I_\text{int}(y_j,y_{j+1})$ and $I_\text{st}(y_j,y_{j+1})=\min\limits_{L_j\in[0,1]}\bar I_\text{st}(y_j,y_{j+1},L_j)$ competing for $c(y_j,y_{j+1})=\min\left\{I_\text{int}(y_j,y_{j+1}),I_\text{st}(y_j,y_{j+1})\right\}$.
Distinguishing which one is acting separately on each subinterval $[t_j,t_{j+1}]$, each with scale $\eps_j=\eps (t_{j+1}-t_j)$, one simply guesses as in the proof of Theorem~\ref{theo:LDP_static} how to pick either an $\eta$-neighborhood $E_{j,\eta}\subset E_j$ of $y^*_{j+1}$ or a neighborhood $F_{j,\eta}\subset E_j\times[0,1]$ of $(y^*_{j+1},L^*_j)$ (depending on whether $I_\text{int}$ or $I_\text{st}$ is smaller) in order to get the LDP lower bound, up to an arbitrarily small correction $\eta$ (see again the details of the proof of Theorem~\ref{theo:LDP_static}).
The LDP upper bound is even simpler, given that the upper estimates used in the static proof were actually global in $x,y,L$ (at least to a sufficient extent so that they can be iterated $N$ times).
The $t_{j+1}-t_j$ scaling in \eqref{eq:def_discrete_Lagrangian_LN} appears as usual due to our setting $\eps_j=\eps (t_{j+1}-t_j)$ in each subinterval.
\end{proof}
\begin{proof}[Proof of Proposition~\ref{prop:LDP_dynamic_pointwise}]
In view of Lemma~\ref{lem:LDP_slicing} and by the Dawson-G\"artner theorem \cite[thm. 4.6.1]{dembo2009large}, the desired LDP will automatically hold with good rate function
 $$
 \sup\limits_{|\tau_N|\to 0} C_x^N(\o_{t_0},\dots,\o_{t_N})
 =
 \iota_{\{\o_{0}=x\}}+\sup\limits_{|\tau_N|\to 0} C^N(\o_{t_0},\dots,\o_{t_N}),
 \hspace{1cm}\o\in\O
 $$
 so clearly it is enough to show that
 $$
 \sup \limits_{|\tau_N|\to 0}\sum\limits_{j=0}^{N-1}\frac{c(\o_{t_j},\o_{t_{j+1}})}{t_{j+1}-t_j}
 =
 \begin{cases}
\int_0^1 \uL(\o_t,\dot\o_t)\rd t & \mbox{if }\o\in H^1\\
+\infty & \mbox{else}
 \end{cases}
 $$
 for any fixed $\o\in\O$.

 To this end, consider first $\o\in H^1$.
 Setting $\g^j_{s}\coloneqq \o_{t_j+s(t_{j+1}-t_j)}$, we have by \eqref{eq:c=min_L} and $2$-homogeneity of $\uL(x,\cdot)$ that
$$
c(\o_{t_j},\o_{t_{j+1}})
\leq \int_0^1 \uL(\g^j_s,\dot\g^j_s)\rd s =(t_{j+1}-t_j)\int_{t_j}^{t_{j+1}}\uL(\o_t,\dot\o_t)\rd t.
$$
As a consequence
$$
C^N(\o_{t_0},\dots,\o_{t_N})
=
\sum\limits_{j=0}^{N-1}\frac{c(\o_{t_j},\o_{t_{j+1}})}{t_{j+1}-t_j}
\leq \sum\limits_{j=0}^{N-1} \int_{t_j}^{t_{j+1}}\uL(\o_t,\dot\o_t)\rd t
=\int_0^1 \uL(\o_t,\dot\o_t)\rd t
$$
for any $N$.
In order to get a lower bound, let $(\o^N_t)_{t\in[0,1]}$ be the continuous, piecewise geodesic interpolation of $\o_{t_0},\dots,\o_{t_N}$, where geodesics are not the Euclidean ones but rather the intrinsic ones from Theorem~\ref{theo:c_static_dynamic-geodesics}, and appropriately scaled in time so that
$$
C^N(\o_{t_0},\dots,\o_{t_N})
=
\sum\limits_{j=0}^{N-1}\frac{c(\o_{t_j},\o_{t_{j+1}})}{t_{j+1}-t_j}= \int_0^1\uL(\o^N_t,\dot\o^N_t)\rd t.
$$
By coercivity \eqref{eq:L_coercive} and the previous upper bound we see that $\{\o^N\}_{N}$ is bounded in $H^1$ with fixed endpoints $\o^N_0=\o_0$ and $\o^N_1=\o_1$.
We can therefore assume that $\o^N$ has a weak-$H^1$ limit, and this limit is of course $\o$.
Hence by the weak lower semicontinuity in Theorem~\ref{theo:c_static_dynamic-geodesics} and from the previous upper bound we obtain
\begin{multline*}
\int_0^1\uL(\o_t,\dot\o_t)\rd t
\leq
\liminf\limits_{N\to\infty} \int_0^1\uL(\o_t^N,\dot\o_t^N)\rd t
\\
=
\liminf\limits_{N\to\infty} C^N(\o_{t_0},\dots,\o_{t_N})
\\
\leq
\limsup\limits_{N\to\infty} C^N(\o_{t_0},\dots,\o_{t_N})
\leq
\int_0^1\uL(\o_t,\dot\o_t)\rd t
\end{multline*}
as claimed.

If now $\o\not\in H^1$, standard properties of Sobolev functions give that $\sum\limits_{j=0}^{N-1}\frac{|\o_{t_{j+1}}-\o_{t_j}|^2}{t_{j+1}-t_j}\to+\infty$ as $|\tau^N|\to 0$.
The coercivity \eqref{eq:L_coercive} integrated along geodesics readily gives $c(x,y) \geq \frac{1}{2a}|x-y|^2$ for any $x,y$, thus
$$
\sum\limits_{j=0}^{N-1}\frac{c(\o_{t_{j+1}},\o_{t_j})}{t_{j+1}-t_j}
\geq
\frac{1}{2a}\sum\limits_{j=0}^{N-1}\frac{|\o_{t_{j+1}}-\o_{t_j}|^2}{t_{j+1}-t_j}\xrightarrow[N\to\infty]{}+\infty
$$
and the proof is complete.
\end{proof}
We are finally in position to establish our main result.
\begin{proof}[Proof of Theorem~\ref{theo:LDP_dynamic}]
Since the uniform topology is obviously stronger than pointwise convergence, by \cite[Corollary. 4.2.6]{dembo2009large} it is enough to prove that $\{R^\eps_x\}_{\eps>0}$ is exponentially tight for the uniform topology.
By \cite[Theorem 4.1]{Puhalskii} it is enough to check the ``exponential Arzel\`a-Ascoli'' conditions
\begin{enumerate}[(i)]
 \item
 compactness:
 $$
 \lim\limits_{M\to\infty}\limsup\limits_{\eps\to 0}\eps\log R^\eps_x\Big(\Big\{\o:\ |\o(0)|\geq M\Big\}\Big)=-\infty,
 $$
 \item
 \label{item:equicontinuiety_exp}
 equicontinuity: for all fixed $\eta >0$,
 \begin{equation}
\label{eq:expo_equicont}
 \lim\limits_{\delta\to 0}\limsup\limits_{\eps\to 0}\eps\log R^\eps_x\Big(\Big\{\o:\ \sup\limits_{|t-s|\leq \delta}|\o(t)-\o(s)|\geq \eta\Big\}\Big)=-\infty.
 \end{equation}
\end{enumerate}
Since $\o(0)=x$ for $R^\eps_x$-a.e. $\o\in\O$ the first condition is trivial so we only focus on the second part.
Fix $\eta>0$.
We first deal with the vertical component, which according to \eqref{eq:X'_distrib_sum_BM} reads
$$
{X^\eps_t}'
= \sqrt a \tilde B'_{O_{\eps t}}+ \check B'_{\eps t-O_{\eps t}}
$$
for two independent $(d-1)$-Brownian motions $\tilde B',\hat B'$.
For fixed $0\leq s\leq t \leq 1$ we have
$$
\left|{X^\eps_t}'-{X^\eps_s}'\right|
\leq
\sqrt a \left|\tilde B'_{O_{\eps t}}-\tilde B'_{O_{\eps s}}\right| + \left|\check B'_{\eps t-O_{\eps t}}-\check B'_{\eps s-O_{\eps s}}\right|
$$
and therefore
\begin{multline*}
 \PP
 \left(\sup\limits_{|t-s|\leq \delta}\left|{X^\eps_t}'-{X^\eps_s}'\right|\geq \eta\right)
\leq
\PP\left(\sup\limits_{|t-s|\leq \delta}|\tilde B'_{O_{\eps t}}-\tilde B'_{O_{\eps s}}|\geq \frac \eta{2\sqrt a}\right)
\\
+
\PP \left(\sup\limits_{|t-s|\leq \delta}|\hat B'_{\eps t-O_{\eps t}}-\hat B'_{\eps s-O_{\eps s}}|\geq \frac \eta 2\right)
\eqqcolon
A'_\eps +B'_\eps
\end{multline*}
For the first $A'_\eps$ term, recall that the occupation time $O_t=\int_0^t \1_{\{X^1_\tau=0\}}\rd \tau$ only depends on the horizontal sticky Brownian motion, which in turn is completely independent of the two vertical Brownian motions $\tilde B',\hat B'$.
As a consequence, by independence and standard properties of Brownian increments, $\tilde B_{O_{\eps t}}-\tilde B_{O_{\eps s}}$ is distributed as $\tilde B_{O_{\eps t}-O_{\eps s}}$ and therefore
$$
A'_\eps = \PP\left(\sup\limits_{|t-s|\leq \delta}|\tilde B'_{O_{\eps t}-O_{\eps s}}|\geq \frac \eta{2{\sqrt a}}\right).
$$
Now, since $0\leq O_{\eps t}-O_{\eps s}=\int_{\eps s}^{\eps t}\1_{\{X^1_\tau=0\}}\rd \tau \leq \eps (t-s) $ one has
$$
\sup\limits_{|t-s|\leq \delta}|\tilde B'_{O_{\eps t}-O_{\eps s}}|\leq
\sup \limits_{\tau \leq \eps \delta}|\tilde B'_\tau|
$$
almost surely.
Finally, by standard properties of Brownian motion one has the classical estimate (see e.g. \cite[Lemma 5.2.1]{dembo2009large}
$$
\PP \left(\sup _{\tau\in [0,T]}|B_\tau|\geq M\right) \leq 4(d-1) \exp\left(-\frac{M^2}{2(d-1) T}\right),
\hspace{1cm}\forall M>0,\, T>0.
$$
Putting $M=\eta/2{\sqrt a}$ and $T=\eps\delta$ we see that
$$
A'_\eps \leq 4(d-1)\exp\left( -\frac{\eta ^2}{8a(d-1)\eps\delta}\right)
$$
Similarly, since $t-O_t$ and $\hat B'$ are independent and $ 0\leq (\eps t-O_{\eps t})-(\eps s-O_{\eps s})\leq \eps(t-s)$, one easily gets that
$$
B'_\eps
\leq 4(d-1)\exp\left( -\frac{\eta ^2}{8 (d-1)\eps\delta}\right)
$$
and thus
$$
A'_\eps+B'_\eps \leq C\exp(-c\eta^2/\delta\eps)
$$
for some uniform constants $C,c>0$ independent of $\eta,\eps,\delta>0$.
This immediately gives the exponential equicontinuity \eqref{eq:expo_equicont} for the horizontal component.

Let us now turn to the horizontal component ${X^{\eps}_{t}}^1=X^1_{\eps t}$, where $(X^1_\tau)_{\tau\geq 0}$ is the one-dimensional sticky-reflecting Brownian motion started from $x_1\geq 0$.
First of all, by fine properties of the one-dimensional reflected SBM \cite[eqs. (2.11) and (3.7)]{engelbert2014stochastic} we have that
$$
X^1_t
=
\left|S_t\right|
=
\left|x_1+B_t -\int _0^t\1_{\{ S_\tau=0\}}\rd B^0_\tau\right|,
$$
where $S_t$ is the two-sided, non-reflected sticky Brownian motion on $\R$ and $B_t,B^0_t$ are two standard Brownian motions in $\R$.
It follows that
$$
|X^1_t-X^1_s|
\leq | S_r-S_s|
\leq \left|B_t-B_s\right| + \left|\int_s^t\1_{\{ S_\tau=0\}}\rd B^0_\tau\right|
$$
and therefore, for fixed $\eta,T>0$ and $s\geq 0$
\begin{align*}
\PP\left(\sup\limits_{t\in [s,s+T]} |S_t-S_s|\geq \eta\right)
& \leq
\PP\left(\sup\limits_{t\in [s,s+T]} |B_t-B_s|\geq \eta/2\right)
\\
&\hspace{2cm} +
\PP\left(\sup\limits_{t\in [s,s+T]} \left|\int_s^t \1_{\{ S_\tau=0\}}\rd B^0_\tau\right|\geq \eta/2\right)
\\
& \eqqcolon A^1_T+B^1_T.
\end{align*}
For the first term we have exactly as before
$$
A^1_T
=
\PP\left(\sup _{\tau\in [0,T]}|B_\tau|\geq \eta/2\right) \leq 4 \exp\left(-\frac{\eta^2}{8 T}\right).
$$
For the second term let us set
$$
Y_t\coloneqq \int_s^t\1_{\{S_\tau=0\}}\rd B^0_\tau.
$$
For fixed $\lambda>0$ let $f(y)\coloneqq \exp(\lambda y)$ and $Z_t\coloneqq f(Y_t)$.
Because $f$ is convex and $Y$ is a martingale, $Z$ is a submartingale and therefore by Doob's martingale inequality
$$
\PP\left(\sup\limits_{t\in [s,s+T]}  Y_t\geq \eta/2\right)
=
\PP\left(\sup\limits_{t\in [s,s+T]}  Z_t\geq f(\eta/2)\right)
\leq \frac{\EE(Z_{s+T})}{\exp(\lambda \eta/2)}.
$$
Applying It\^o's lemma
$$
\rd Z_t=f'(Y_t) \rd Y_t +\frac 12 f''(Y_t)\rd\langle Y\rangle_t
=\lambda Z_t \rd Y_t + \frac{\lambda^2}2 Z_t \1_{\{S_t=0\}}\rd t,
$$
taking expectation, and integrating we see that
$$
\EE(Z_{t})
=
\EE(Z_{s}) + \frac{\lambda^2} 2\EE\int_s^ t Z_\tau \1_{\{ S_\tau=0\}}\rd \tau
=
1 + \frac{\lambda^2} 2\int_s^ t\EE( Z_\tau \1_{\{S_\tau=0\}})\rd \tau
\leq
1 + \frac{\lambda^2} 2\int_s^ t\EE( Z_\tau )\rd \tau.
$$
Gr\"onwall's inequality yields $\EE(Z_{s+T})\leq \exp (\lambda^2 T/2)$, thus
$$
\PP\left(\sup\limits_{t\in [s,s+T]}  Y_t\geq M/2\right)
\leq \exp(\lambda^2T/2-\lambda \eta/2)
$$
for any $\lambda>0$.
The right-hand side is minimized for $\lambda=\frac \eta{2T}$ and therefore
$$
\PP\left(\sup\limits_{t\in [s,s+T]}  Y_t\geq \eta/2\right)
\leq \exp(-\eta^2/8T).
$$
Since $Y_t$ is distributed as $-Y_t$ we conclude that
$$
B_T^1=\PP\left(\sup\limits_{t\in [s,s+T]}  |Y_t|\geq \eta/2\right)
\leq 2\exp(-\eta^2/8T).
$$
Finally scaling down time $t,s\leftrightarrow\eps s,\eps t$ and $T=\eps \delta$ gives
\begin{multline*}
\PP\left(\sup\limits_{|s-t|\leq \delta}|{X^{\eps}_t}^1-{X^{\eps}_s}^1|  \geq \eta\right)
=
\PP\left(\sup\limits_{|s-t|\leq \delta}|X^1_{\eps t}-X^1_{\eps s}|  \geq \eta\right)
\leq
A^1_{\eps \delta}+ B^1_{\eps\delta}
\leq C \exp(-c\eta^2/\eps\delta).
\end{multline*}
This shows that the horizontal component also satisfies the equicontinuity \eqref{eq:expo_equicont} and concludes the proof.
\end{proof}

\subsection*{Acknowledgments}
J.-B.C. was supported by FCT - Funda\c{c}\~{a}o para a Ci\^encia e a Tecnologia, under the project UIDB/04561/2020.
L.M. was funded by FCT - Funda\c{c}\~{a}o para a Ci\^encia e a Tecnologia through a personal grant 2020/00162/CEECIND (DOI 10.54499/2020.00162.CEECIND/CP1595/CT0008) and project UIDB/00208/2020 (DOI 10.54499/UIDB/00208/2020) and warmly thanks A. Baradat, C. L\'eonard, and M. von Renesse for discussions pertaining to this work. L.N. benefited from the support of the FMJH Program PGMO and from the ANR project GOTA (ANR-23-CE46-0001). 



\bibliographystyle{plain}
\bibliography{./biblio}

\bigskip
\noindent
{\sc
Jean-Baptiste Casteras (\href{mailto:jeanbaptiste.casteras@gmail.com}{\tt jeanbaptiste.casteras@gmail.com}).
\\
CMAFcIO, Faculdade de Ci\^encias da Universidade de Lisboa, Edificio C6, Piso 1, Campo Grande 1749-016 Lisboa, Portugal
}
\\

\noindent
{\sc
L\'eonard Monsaingeon (\href{mailto:leonard.monsaingeon@univ-lorraine.fr}{\tt leonard.monsaingeon@univ-lorraine.fr}).
\\
Institut \'Elie Cartan de Lorraine, Universit\'e de Lorraine, Site de Nancy B.P. 70239, F-54506 Vandoeuvre-l\`es-Nancy Cedex, France
\\
Group of Mathematical Physics, Departamento de Matem\'atica, Instituto Superior T\'ecnico, Av. Rovisco Pais, 1049-001 Lisboa, Portugal
}
\\

\noindent
{\sc
Luca Nenna (\href{mailto:luca.nenna@universite-paris-saclay.fr}{\tt luca.nenna@universite-paris-saclay.fr}).
\\
Universit\'e Paris-Saclay, CNRS, Laboratoire de math\'ematiques d'Orsay, ParMA, Inria Saclay, 91405, Orsay, France
}
\end{document}